\newtheorem{thm}{Theorem}[section]
\newtheorem{lem}[thm]{Lemma}
\newtheorem{cor}[thm]{Corollary}
\newtheorem{definition}[thm]{Definition}
\newtheorem{example}[thm]{Example}
\title{DP-3-coloring of some planar graphs}
\author{  Runrun Liu$^{1}$ \hskip 0.2in Sarah Loeb$^{2}$\hskip 0.2in Yuxue Yin$^{1}$\hskip 0.2in Gexin Yu$^{1,2}$}
\address{
$^{1}$\small School of Mathematics and Statistics, Central China Normal University, Wuhan, Hubei, China.\\
$^2$\small Department of Mathematics, The College of William and Mary, Williamsburg, VA, 23185, USA.
}
\thanks{The research of the last author was supported in part by the Natural Science Foundation of China (11728102) and the NSA grant H98230-16-1-0316.}
\email{gyu@wm.edu}
\date{\today}
\begin{document}
\maketitle

\begin{abstract}
In this article, we use a unified approach to prove several classes of planar graphs are DP-$3$-colorable, which extend the corresponding results on $3$-choosability.
\end{abstract}

\section{Introduction}
Graph coloring is a central topic in graph theory. A \emph{proper $k$-coloring} of a graph $G$ is a function $c$ that assigns an element $c(v) \in [k]$ to each $v \in V(G)$ so that adjacent vertices receive distinct colors. We say that $G$ is \emph{$k$-colorable} if it has a $k$-coloring and call the minimum $k$ such that $G$ has a $k$-coloring the \emph{chromatic number of $G$}, denoted by $\chi(G)$. The famous Four Color Theorem states that every planar graph is properly $4$-colorable.

However, it is NP-complete to decide whether a planar graph is properly $3$-colorable, which provides motivation to look for sufficient conditions for planar graphs to be 3-colorable. For example, Gr\"otzsch~\cite{G59} showed every planar graph without 3-cycles is 3-colorable.

Vizing~\cite{V76},  and independently Erd\H{o}s, Rubin, and Taylor~\cite{ERT79} introduced list coloring as a generalization of proper coloring. A \emph{list assignment} $L$ gives each vertex a list of available colors $L(v)$. A graph $G$ is {\em $L$-colorable} if there is a proper coloring $c$ of $V(G)$ such that $c(v)\in L(v)$ for each $v\in V(G)$. A graph $G$ is {\em $k$-choosable} if $G$ is $L$-colorable for each $L$ with $|L(v)|\ge k$ and the minimum $k$ for which $G$ is $k$-choosable is the \emph{list-chromatic number} $\chi_\ell(G)$.

Since the definition of $k$-choosability requires $G$ to be $L$-colorable for the specific lists $L(v) = [k]$ for all $v \in V(G)$, we have $\chi(G) \le \chi_\ell(G)$. Voigt~\cite{V93} found a non-4-choosable planar graph, showing this inequality may be strict, even if the family of planar graphs.  Thomassen~\cite{T94} showed that every planar graph is $5$-choosable giving an upper bound for this class.

Like for 3-colorability of planar graphs, sufficient conditions for planar graphs to be 3-choosable have been studied. However, the problem is more difficult, Voigt~\cite{V95} gave a planar graph without 3-cycles that is not 3-choosable. On the other hand, Thomassen~\cite{T95} showed that every planar graphs without 3-cycles and 4-cycles are 3-choosable. Some other conditions are included in Theorem~\ref{thm1}.

One difficulty in the study of list coloring problems is that some important techniques used in coloring (for example, identification of vertices) are not feasible in list coloring.  To overcome this difficulty,  Dvo\^{r}\'{a}k and Postle~\cite{DP17} introduced DP-coloring under the name correspondence coloring, as a generalization of list coloring.

\begin{definition}
Let $G$ be a simple graph with $n$ vertices and let $L$ be a list assignment for $V(G)$. For each edge $uv$ in $G$, let $M_{uv}$ be a matching between the sets $L(u)$ and $L(v)$ and let $\mathcal{M}_L = \{ M_{uv} : uv \in E(G)\}$, called the \emph{matching assignment}. Let $H_{L}$ be the graph that satisfies the following conditions
\begin{itemize}
\item each $u\in V(G)$ corresponds to a set of vertices $L(u)$ in $H_L$
\item for all $u \in V(G)$, the set $L(u)$ forms a clique
\item if $uv \in E(G)$, then the edges between $L(u)$ and $L(v)$ are those of $M_{uv}$
\item if $uv \notin E(G)$, then there are no edges between $L(u)$ and $L(v)$
\end{itemize}
If $H_L$ contains an independent set of size $n$, then $G$ has a {\em $\mathcal{M}_L$-coloring}. The graph $G$ is {\em DP-$k$-colorable} if, for any matching assignment $\mathcal{M}_L$ in which $L(u)\supseteq[k]$ for each $u \in V(G)$, it has a $\mathcal{M}_L$-coloring. The minimum $k$ such that $G$ is DP-$k$-colorable is the {\em DP-chromatic number} of $G$, denoted by $\chi_{DP}(G)$.
\end{definition}

As in list coloring, we refer to the elements of $L(v)$ as colors and call the element $i\in L(v)$ chosen in the independent set of an $\mathcal{M}_L$-coloring as the color of $v$.

DP-coloring generalizes list coloring, even with the restriction that $L(u) = [k]$ for all $v \in V(G)$. To see this, consider a list assignment $L'$ with $|L'(u)| = k$ for all $u \in V(G)$. We can biject the elements of $L'(u)$ and $[k]$ and, for each $uv \in E(G)$, let $M_{uv}$ be a matching between the colors of $u$ and $v$ that correspond to equal elements of $L'(u)$ and $L'(v)$. Accounting for relabeling, this $\mathcal{M}_L$-coloring is equivalent to an $L'$-coloring. Thus, $\chi_\ell(G) \le \chi_{DP}(G)$. However, DP-coloring and list coloring can be quite different. For example, Bernshteyn~\cite{B16} showed that the DP-chromatic number of every graph with average degree $d$ is $\Omega(d/\log d)$, while Alon\cite{A00} proved that $\chi_l(G)=\Omega(\log d)$ and the bound is sharp.

Dvo\^{r}\'{a}k and Postle~\cite{DP17} used this notation proved that every planar graph without cycles of lengths from $4$ to $8$ is $3$-choosable (actually a stronger form using DP-coloring), solving a long-standing conjecture of Borodin~\cite{B13}. Since then much attention was drawn on this new coloring, see for example, \cite{B16,B17,BK17a,BK17b,BKP17,BKZ17,KO17a,KO17b,KY17, LLNSY18}.

We are interested in DP-coloring of planar graphs. Dvo\v{r}\'ak and Postle~\cite{DP17} noted that Thomassen's proofs~\cite{T94} for choosability can be used to show $\chi_{DP}(G)\le5$ if $G$ is a planar graph, and $\chi_{DP}(G)\le3$ if $G$ is a planar graph with no 3-cycles and 4-cycles. Some sufficient conditions were given in \cite{KO17a,KO17b,LLNSY18} for a planar graph to be DP-$4$-colorable.

We study the sufficient conditions for a planar graph to be DP-$3$-colorable.  Many such conditions are known for $3$-choosability of planar graphs, some of which are listed in Theorem~\ref{thm1}.

\begin{thm}\label{thm1}
A planar graph $G$ is $3$-choosable if one of the following conditions holds
\begin{itemize}
\item $G$ contains no $\{3,6,7,8\}$-cycles. (\cite{L09}, 2009)
\item $G$ contains no $\{3, 5, 6\}$-cycles. (\cite{LSS05}, 2005)
\item $G$ contains no $\{4,5,6,9\}$-cycles. (\cite{ZW05a}, 2005)
\item $G$ contains no $\{4,5,7,9\}$-cycles. (\cite{ZW04}, 2004)
\item $G$ contains no $\{5,6,7\}$-cycles and the distance of triangles is at least two. (\cite{LCW16}, 2016)
\end{itemize}
\end{thm}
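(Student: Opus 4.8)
Since Theorem~\ref{thm1} collects results already established in the cited papers, a bare-bones proof would just point to those references; the more informative route, and the one underlying the ``unified approach'' advertised in the abstract, is to derive every item from a single strengthened statement by Thomassen-style induction together with discharging. Concretely, the plan is to prove a statement of the form: if $G$ is a plane graph whose forbidden-cycle hypothesis is one of the five listed (possibly augmented by the triangle-distance condition of the last item), $C$ is the boundary walk of the outer face, and $L$ is a list assignment with $|L(v)|\ge 3$ for interior vertices, $|L(v)|\ge 2$ for $v\in V(C)$, and two prescribed adjacent vertices of $C$ precolored, then $G$ is $L$-colorable. Each bullet of Theorem~\ref{thm1} then follows by deleting one edge of a triangle (or by a short ad hoc argument at the outer face) and applying this statement with all lists equal to $[3]$.

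First I would pass to a counterexample $G$ minimizing $|V(G)|+|E(G)|$ and extract structure: $G$ is $2$-connected, every interior vertex has degree at least $3$, $C$ has no chord, and $G$ has no separating cycle of a length that is ``small'' for the relevant family --- otherwise one splits $G$ along such a cycle or chord, colors the pieces by minimality, and glues the colorings together. These reductions force $G$ to be essentially a near-triangulation away from the forbidden lengths, so its face structure is tightly controlled.

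The core is a discharging argument. Assign to each vertex $v$ the charge $d(v)-4$ and to each face $f$ the charge $\ell(f)-4$; Euler's formula gives total charge $-8$, which after a boost to the outer face can be localized. One then lists a family of \emph{reducible configurations} --- small subgraphs (for instance a $3$-face sharing edges with two further $3$-faces, or a degree-$2$ vertex of $C$ adjacent to a degree-$3$ vertex) that cannot occur in a minimal counterexample, because in each case one deletes or modifies the configuration, colors the smaller graph by minimality, and extends using the slack in the large lists --- and designs rules moving charge from large faces and high-degree vertices toward $3$-faces and low-degree vertices of $C$. Using the forbidden-cycle hypothesis to bound how $3$-faces can cluster, one checks that, in the absence of every reducible configuration, each vertex and each face finishes with nonnegative charge, contradicting the total $-8$.

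The main obstacle is uniformity: the five hypotheses forbid genuinely different sets of cycle lengths, so the local picture around a triangle --- and hence the correct discharging rules and the precise list of reducible configurations --- differs from case to case, and making one scheme absorb all of them (together with the extra triangle-distance condition) requires choosing the reducible configurations so that the worst case of each family is covered, then verifying reducibility configuration by configuration. That verification is the heaviest part of the work, and it is also where the eventual passage to DP-coloring costs the most, since one can no longer permute colors freely and must track the matchings $M_{uv}$ and the two precolored vertices throughout.
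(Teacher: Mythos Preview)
Theorem~\ref{thm1} is not proved in this paper at all: it is a survey statement collecting five results from the literature, each attributed to its original source. The paper's own contribution is Theorem~\ref{thmmain}, which upgrades each bullet to DP-$3$-colorability; since $\chi_\ell(G)\le\chi_{DP}(G)$, those proofs in Sections~3--6 \emph{also} reprove Theorem~\ref{thm1}. So the relevant comparison is between your proposed scheme and the arguments the paper actually carries out for the DP versions.

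Your route and the paper's route diverge in two substantive ways. First, the paper does \emph{not} use a Thomassen-style precoloring-extension framework with lists of size $2$ on the outer boundary and two precolored vertices; it works directly with a minimal non-DP-$3$-colorable graph, with no boundary at all singled out. Second, and more importantly, the engine of the paper is a single reducibility lemma (Lemma~\ref{near-2-degenerate}) asserting that any subgraph $H$ whose vertices can be ordered $v_1,\dots,v_\ell$ with $v_1v_\ell\in E(G)$, with $v_\ell$ having an external neighbor, and with each $v_i$ having at most $k-1$ earlier-or-external neighbors, is reducible for DP-$k$-coloring. All of the ``reducible configurations'' you allude to are replaced by instances of this one ordering lemma (packaged as Lemma~\ref{theta-graph-extension} about special paths on a face). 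The discharging is also set up differently: the paper uses $\mu(v)=2d(v)-6$, $\mu(f)=d(f)-6$ (so only $5^-$-faces start negative) rather than your $d(v)-4$, $\ell(f)-4$, and the rules are organised around how much a $7^+$-face must pay across each maximal $4^-$-controlling walk, with a small ``bank'' to handle a handful of exceptional $8$- or $10$-faces.

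Your Thomassen-style plan is a plausible alternative for the \emph{list}-coloring statements and has been used for some of them individually, but as written it is only an outline: you name neither a single concrete reducible configuration nor a single discharging rule, and you concede that the verification ``is the heaviest part of the work'' without doing any of it. For DP-coloring specifically your sketch would run into the obstruction the paper highlights just after Theorem~\ref{thmmain}: an even cycle of $3$-vertices is \emph{not} reducible for DP-$3$-coloring (Figure~\ref{not2DP}), so the standard list-coloring reductions you have in mind fail, and something like Lemma~\ref{near-2-degenerate} is genuinely needed.
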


We show that these conditions are also sufficient for a planar graph to be DP-$3$-colorable.

\begin{thm}\label{thmmain}
Every planar graph listed in Theorem~\ref{thm1} is DP-$3$-colorable.
\end{thm}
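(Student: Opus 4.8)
The plan is to argue by contradiction with a single discharging framework that covers all five conditions at once. Suppose the claim fails, and let $G$ be a counterexample satisfying one of the conditions of Theorem~\ref{thm1} that is minimal with respect to the number of vertices and, subject to that, the number of edges; fix a plane embedding of $G$ together with a matching assignment $\mathcal{M}_L$ with $L(u)\supseteq[3]$ for every $u$ that admits no $\mathcal{M}_L$-coloring. The first step is the routine structural preliminaries. Since each condition is preserved under taking subgraphs (deleting vertices or edges destroys no cycle and only pushes triangles farther apart), minimality of $G$ makes every proper subgraph DP-$3$-colorable. A vertex $v$ with $d(v)\le 2$ is then reducible: in $H_L$ the clique $L(v)$ has edges only to $L(u)$ for the at most two neighbors $u$ of $v$, and the color chosen on each such $u$ by an $\mathcal{M}_L$-coloring of $G-v$ forbids at most one color of $v$, so a color for $v$ survives. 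Hence $G$ has minimum degree at least $3$. In the same spirit $G$ is connected, and a standard block/small-separator analysis reduces to the case that $G$ is $2$-connected, so that every face is bounded by a cycle. All of these moves use only the operations ``delete vertices'' and ``a matching edge forbids a single color'', which behave in the DP setting exactly as in list coloring.

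The second step is to assemble a shared toolkit of reducible configurations. The guiding principle is the DP-analogue of the extendability lemmas used throughout the choosability literature: a set $S\subseteq V(G)$ is reducible whenever its vertices admit an ordering $v_1,\dots,v_t$ in which each $v_i$ has at most two neighbors in $\{v_{i+1},\dots,v_t\}\cup(V(G)\setminus S)$, because then any $\mathcal{M}_L$-coloring of $G-S$ extends by coloring $v_t,v_{t-1},\dots,v_1$ greedily, at most two colors of the current vertex being forbidden at each step while $|L(v_i)|\ge 3$. (Sharper versions, in which the residual instance on $S$ is finished by a Hall-type or two-list argument rather than greedily, are occasionally needed.) Combined with $2$-connectivity and with the local structure forced by the excluded cycle lengths, this yields the usual list of forbidden configurations for $3$-DP-coloring of sparse plane graphs: two adjacent $3$-vertices inside a triangle-free region, a $3$-vertex incident to too many short faces in a prescribed pattern, clusters of light $3$-faces and $4$-faces, and so on. The one genuinely new point relative to the five source papers is that any reducibility argument there that relied on two vertices receiving a common color, or on identifying two vertices, must be redone from scratch: in DP-coloring distinct vertices have no notion of ``the same color'', so such a step is replaced by a worst-case matching argument, which now and then costs one extra color and forces a slightly larger configuration --- but the governing cycle condition still excludes it in each case (and where it does not, a different reducible configuration has to be located).

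The third step is the discharging itself. Assign to each vertex $v$ the initial charge $d(v)-4$ and to each face $f$ the charge $\ell(f)-4$; by Euler's formula $\sum_{v}(d(v)-4)+\sum_{f}(\ell(f)-4)=-8<0$, so it is enough to redistribute charge so that every vertex and every face ends with nonnegative charge. For each of the five conditions one uses essentially the same rules --- $\ge 5$-faces and $\ge 5$-vertices send fixed amounts of charge across incident edges and through incident $3$-faces to the needy objects ($3$-faces, $4$-faces, and $3$-vertices) --- with only the minor adjustments dictated by which short cycles can occur. The point of the cycle restrictions is exactly that they force triangles and other short faces to be isolated enough for every needy object to accumulate enough charge: in the three conditions forbidding all $3$-cycles this is automatic, and in ``no $\{5,6,7\}$-cycles with triangles at distance at least two'' and in the remaining two conditions the spacing and forbidden-length hypotheses play that role. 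Verifying that the final charge of every vertex and every face is nonnegative in each of the five cases yields the desired contradiction.

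The step I expect to be the main obstacle is the second one: checking uniformly that the reducibility arguments lifted from the five different $3$-choosability proofs all survive the passage to DP-coloring, and packaging them so that one family of discharging rules works. Concretely, every configuration whose known reducibility proof exploited ``$u$ and $w$ get the same color'' or an identification of vertices must be re-examined against the worst-case matching, the resulting (possibly weaker) configuration tested against the relevant cycle condition, and --- if it is no longer forbidden --- replaced. By contrast, the structural preliminaries are standard, and the discharging, while a lengthy finite case analysis, is routine once the reducible configurations are fixed.
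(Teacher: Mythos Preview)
Your proposal has a genuine gap at exactly the point you flag as the main obstacle, but the obstacle is more specific than you describe. The reducibility lemma you state --- greedy extension along an ordering in which each vertex sees at most two already-colored neighbors --- is pure $2$-degeneracy, and it does \emph{not} handle the configuration that drives all five choosability proofs: an even cycle $C$ whose vertices all have degree~$3$ in $G$. In list coloring this is reducible because even cycles are $2$-choosable; but this is precisely a ``two-list argument'' of the kind you invoke parenthetically, and it \emph{fails} for DP-coloring, since an even cycle need not be DP-$2$-colorable (a $4$-cycle with suitably twisted matchings has no independent transversal). So the issue is not only arguments using ``same color'' or identification; the central workhorse configuration itself is lost, and your toolkit as written provides no replacement.

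The paper's replacement is a sharpened degeneracy lemma: one orders the vertices of a subgraph $H$ as $v_1,\dots,v_\ell$ with $v_1v_\ell\in E(G)$, requires $v_\ell$ to have a neighbor outside $H$ (so some color of $v_\ell$ is already forbidden), and then chooses the color of $v_1$ to be matched in $M_{v_1v_\ell}$ to that forbidden color. This single saved color is what lets the final greedy step at $v_\ell$ succeed. Finding such subgraphs forces a reorganization of the discharging around ``special $(3,4,\dots,4,3)$-paths'' on $7^+$-faces, and in two of the five cases the local rules do not balance and a global bank is needed. Two smaller points: only two of the five hypotheses forbid triangles, not three; and the paper uses initial charges $2d(v)-6$ and $d(f)-6$ (so all vertices start nonnegative and only $5^-$-faces are needy), whereas your choice $d(v)-4$, $d(f)-4$ makes every $3$-vertex a sink and would require a substantially different rule set.
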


The proofs of the results use the discharging method, which uses strong induction. Say a structure is \emph{reducible} if it cannot appear in a minimal counterexample $G$. Quickly one may find that all the proofs in Theorem~\ref{thm1} relied heavily on the following fact: an even cycle whose vertices all have degree three is reducible. In other words, if $C$ is an even cycle whose vertices all have degree 3, then any coloring of $G-C$ can be extended to $G$. This follows from the fact that even cycles are 2-choosable. However, this structure is not reducible for results on DP-$3$-coloring, since even cycles may fail to be DP-$2$-colorable, as illustrated in Figure~\ref{not2DP}.

\begin{figure}[ht]
\includegraphics[scale=0.2]{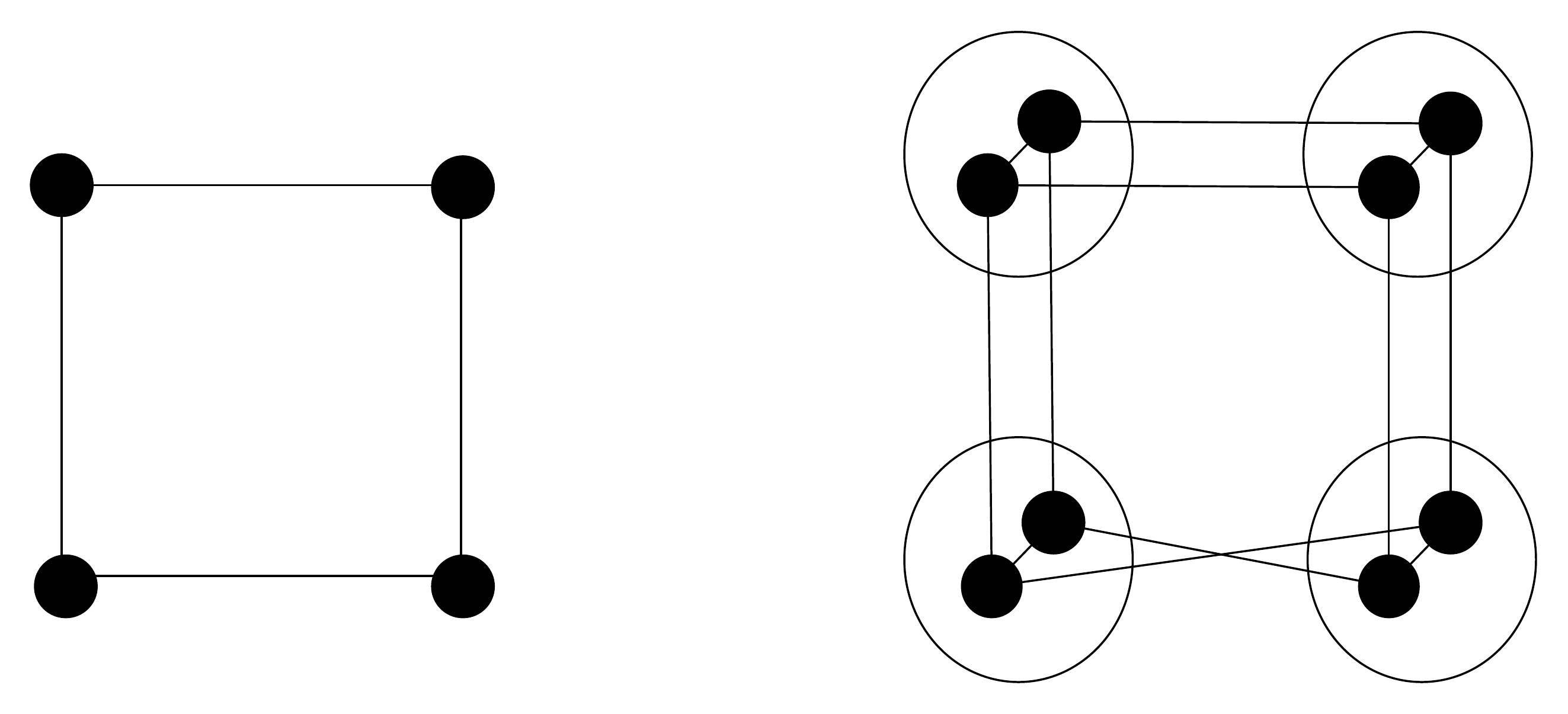}
\caption{A $4$-cycle is not DP-$2$-colorable: the left is a $4$-cycle $G$, and the right is the graph $H_L$.}\label{not2DP}
\end{figure}

Thus new ideas are needed to reach our goal. We will show, by way of discharging, that each of these planar graphs contains a ``near-$(k-1)$-degenerate'' subgraph, which is reducible as shown in Lemma~\ref{near-2-degenerate}. Lemma~\ref{near-2-degenerate} is phrased more generally to give a reducible structure for DP-$k$-coloring; special forms of this structure (namely, theta subgraphs) were the essential components in the proofs of~\cite{KO17a, KO17b, LLNSY18}.

The paper is organized as follows. In section~\ref{reducible}, we provide essential definitions and prove the essential reducible structures needed in all the proofs. In each of the following sections, we give a proof of a part of Theorem~\ref{thmmain}.

 \section{Reducible configurations and a brief introduction to the discharging}\label{reducible}

Graphs mentioned in this paper are all simple.  A $k$-vertex ($k^+$-vertex, $k^-$-vertex) is a vertex of degree $k$ (at least $k$, at most $k$). The \emph{length} of a face is the number of vertices on its boundary, with repetition included.
We may also refer to a $(\ell_1, \ell_2, \ldots, \ell_k)$-face is a $k$-face with facial walk $v_1,v_2,\ldots,v_k$ such that $d(v_i)=\ell_i$.

 \subsection{Reducible configurations}

\begin{lem}\label{minimum}
Let $G$ be a smallest graph (with respect to the number of vertices) that is not DP-$k$-colorable. Then $\delta(G)\ge k$.
\end{lem}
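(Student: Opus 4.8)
The plan is to use minimality of $G$ together with the defining structure of $\mathcal{M}_L$-colorings. Suppose for contradiction that $\delta(G) < k$, so there is a vertex $v$ with $d(v) \le k-1$. Consider $G' = G - v$; since $G'$ has fewer vertices than $G$, it is DP-$k$-colorable. I would then argue that any matching assignment $\mathcal{M}_L$ on $G$ with $L(u) \supseteq [k]$ restricts to such an assignment on $G'$ (just delete the matchings $M_{uv}$ incident to $v$, keeping all lists and remaining matchings), so $G'$ has an $\mathcal{M}_L$-coloring; that is, an independent set $I$ of size $|V(G')| = n-1$ in $H_{L}$ restricted to $V(G')$.

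The key step is extending this coloring to $v$. The chosen colors at the neighbors of $v$ forbid, through the matchings $M_{uv}$ for $u \in N_G(v)$, at most one color of $L(v)$ each: since $M_{uv}$ is a matching between $L(u)$ and $L(v)$, the single color picked at $u$ is matched to at most one vertex in the copy of $L(v)$ inside $H_L$. Hence at most $d(v) \le k-1$ colors of $L(v)$ are blocked, and since $|L(v)| \ge k$, there remains a color $c^\star \in L(v)$ with no neighbor in $I$ in the graph $H_L$. Adding the corresponding vertex of $L(v)$ to $I$ yields an independent set of size $n$ in $H_L$, so $G$ has an $\mathcal{M}_L$-coloring, contradicting the choice of $G$.

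I do not expect a genuine obstacle here — this is the DP-analogue of the standard fact that a minimal non-$k$-colorable graph has minimum degree at least $k$, and the only thing to be careful about is the bookkeeping of what "restricting a matching assignment" means and the observation that each neighbor blocks at most one color (which is exactly where the matching, as opposed to arbitrary bipartite, structure is used). A clean writeup should state explicitly that $H_{L'}$ is the subgraph of $H_L$ induced by $\bigcup_{u \in V(G')} L(u)$, invoke Lemma's hypothesis that $G$ is a smallest counterexample to get the independent set on $V(G')$, and then count blocked colors at $v$.
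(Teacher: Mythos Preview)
Your argument is correct and matches the paper's own proof: assume a vertex $v$ with $d(v)<k$, color $G-v$ by minimality, and extend to $v$ because each neighbor forbids at most one color through its matching, leaving at least $k-d(v)\ge 1$ available colors in $L(v)$. The paper states this in a single sentence without spelling out the restriction of $\mathcal{M}_L$ to $G-v$, but your more explicit bookkeeping is fine.
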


\begin{proof}
Let $v$ be a vertex with $d(v)<k$. Any $\mathcal{M}_L$-coloring of $G-v$ can be extended to $G$ since $v$ has at most $d(v)$ elements of $L(v)$ forbidden by the colors selected for the neighbors of $v$, while $|L(v)|=k$.
\end{proof}

Let $H$ be a subgraph of $G$.  For each vertex $v\in H$, let $A(v)$ be the set of vertices in $L(v)$ that are not neighbors of vertices in $\cup_{u\in G-H} L(u)$.  One may think of $A(v)$ as the available colors of $v$ after $G-H$ being colored.

\begin{lem}\label{near-2-degenerate}
Let $k \ge 3$ and $H$ be a subgraph of $G$. If the vertices of $H$ can be ordered as $v_1, v_2, \ldots, v_{\ell}$ such that the following hold
\begin{itemize}
\item[(1)] $v_1v_{\ell}\in E(G)$ and $|A(v_1)|>|A(v_{\ell})|\ge1$,
\item[(2)] $d(v_{\ell})\le k$ and $v_{\ell}$ has at least one neighbor in $G-H$,
\item[(3)] for each $2\le i\le \ell-1$, $v_i$ has at most $k-1$ neighbors in $G[\{v_1, \ldots, v_{i-1}\}]\cup (G-H)$,
\end{itemize}
then a DP-$k$-coloring of $G-H$ can be extended to a DP-$k$-coloring of $G$.
\end{lem}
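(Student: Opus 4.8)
The plan is to fix a DP-$k$-coloring $\phi$ of $G-H$ and then color the vertices of $H$ one at a time in the order $v_1,v_2,\dots,v_\ell$, treating $v_1$ and $v_\ell$ with care and dispatching the intermediate vertices by a routine greedy step. Once $\phi$ is fixed, $A(v)$ is exactly the set of colors of $v$ left unforbidden by $\phi$; since $M_{uv}$ is a matching, each neighbor of $v$ in $G-H$ forbids at most one color of $v$, and hence $|A(v)|\ge k-|N(v)\cap V(G-H)|$ (recall $|L(v)|\ge k$). Write $t_\ell:=|N(v_\ell)\cap V(G-H)|$; by (2), $t_\ell\ge 1$, and so $|A(v_\ell)|\ge k-t_\ell$.

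The crucial move is the choice of color for $v_1$. Because $M_{v_1v_\ell}$ is a matching, at most $|A(v_\ell)|$ colors of $L(v_1)$ are matched by $M_{v_1v_\ell}$ to a color lying in $A(v_\ell)$; since $|A(v_1)|>|A(v_\ell)|$, I can choose a color $c_1\in A(v_1)$ that $M_{v_1v_\ell}$ does \emph{not} match to any color of $A(v_\ell)$, and I color $v_1$ with $c_1$. The point of this choice is that $v_1$ will then forbid no color of $A(v_\ell)$, no matter how the rest of $H$ gets colored.

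Next I would color $v_2,\dots,v_{\ell-1}$ in order. When $v_i$ is reached, all of its already-colored neighbors lie in $\{v_1,\dots,v_{i-1}\}\cup V(G-H)$, and by (3) there are at most $k-1$ of them, each forbidding at most one color of $v_i$; since $|L(v_i)|\ge k$, some color remains for $v_i$. Finally I would color $v_\ell$. Its already-colored neighbors are its $t_\ell$ neighbors in $G-H$, the vertex $v_1$, and the set $S:=N(v_\ell)\cap\{v_2,\dots,v_{\ell-1}\}$, and these three families are pairwise disjoint subsets of $N(v_\ell)$, so $|S|+1+t_\ell\le d(v_\ell)\le k$ by (1) and (2). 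A color forbidden for $v_\ell$ by one of its neighbors in $G-H$ lies outside $A(v_\ell)$ by the definition of $A(v_\ell)$; the color forbidden by $v_1$ lies outside $A(v_\ell)$ by the choice of $c_1$; and $S$ forbids at most $|S|$ colors. Hence at most $|S|\le k-1-t_\ell<k-t_\ell\le|A(v_\ell)|$ colors of $A(v_\ell)$ are forbidden for $v_\ell$, so $v_\ell$ can be colored from $A(v_\ell)$, completing the extension.

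The whole content of the argument is in the choice of $c_1$: the edge $v_1v_\ell$ together with the strict inequality $|A(v_1)|>|A(v_\ell)|$ lets one ``spend'' one color of $v_1$ so that $v_1$ becomes invisible to $v_\ell$ on the palette $A(v_\ell)$, which supplies precisely the slack that a bare even cycle of $3$-vertices lacks (cf.\ Figure~\ref{not2DP}). I do not expect a genuine obstacle; the two points needing attention are the disjointness of the three neighbor families of $v_\ell$ and keeping the final inequality $|S|<|A(v_\ell)|$ strict.
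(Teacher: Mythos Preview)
Your argument is correct and follows essentially the same strategy as the paper's proof: choose the color of $v_1$ so that it forbids nothing in $A(v_\ell)$, then greedily color $v_2,\dots,v_{\ell-1}$, and finally color $v_\ell$. Your phrasing of the key step---selecting $c_1\in A(v_1)$ not matched into $A(v_\ell)$, which exists because $|A(v_1)|>|A(v_\ell)|$---is in fact slightly cleaner than the paper's (which singles out one forbidden color $c$ of $v_\ell$ and aims $c'$ at it), and your explicit count $|S|\le k-1-t_\ell<k-t_\ell\le|A(v_\ell)|$ fills in the last line that the paper leaves implicit.
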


\begin{proof}
Fix a matching assignment $\mathcal{M}_L$ with $L(v) = [k]$ for all $v \in V(G)$ and fix a $\mathcal{M}_L$-coloring of $G-H$. Since by (2) $v_{\ell}$ has a neighbor in $G-H$, a color $c\in L(v_{\ell})$ is forbidden by its neighbors in $G-H$. Now by (1) we can choose a color $c'\in L(v_1)$ such that $c'$ is matched to $c$ in $M_{v_1v_{\ell}}$ or not matched with anyone in $L(v_{\ell})$ at all. We may then greedily color $v_2,\ldots,v_{\ell}$ in order, since by (3) there is always at least one color available for $v_i$ $(2\le i\le {\ell-1})$ when we get to it. The choice of $c'$ for $v_1$ guarantees that $v_{\ell}$ also has a color available.
\end{proof}

A \emph{$(d_1,\ldots,d_t)$-walk} $u_1\ldots u_t$ on a face $C$ is a set of consecutive vertices along the facial walk of $C$ such that $d(u_i) = d_i$.  We allow for any of the $d_i$ to be replaced by $d_i^+$ or $d_i^-$ to represent that $d(u_i) \ge d_i$ or $d(u_i) \le d_i$ respectively. Now that a face $C$ may contain a cut vertex, so repeated vertices on the walk is possible. If the vertices are distinct, then we also call it a {\em $(d_1,\ldots,d_t)$-path}. When $t=2$, we sometimes call it a {\em $(d_1,d_2)$-edge}. For $1 \le i \le t-1$, we say the edge $u_i u_{i+1}$ \emph{controls} the face that is adjacent to $C$ across $u_i u_{i+1}$. We generalize this to say that a $(d_1,\ldots,d_t)$-walk $u_1\ldots u_t$ \emph{controls} the faces that are adjacent to $C$ over the edges $u_i u_{i+1}$. A $k$-vertex $v$ with $k\ge 4$ on a $7^+$-face $f$ is {\em rich} to $f$ if none of the two edges of $v$ on $f$ control a $4^-$-face,  {\em semi-rich} if exactly one of them controls a $4^-$-face, and {\em poor} if they control two $4^-$-faces.

In our proofs, we will often be concerned with $(d_1,\ldots,d_t)$-walks where every controlled face is a $4^-$-face. For this reason, we say a $(d_1,\ldots,d_t)$-walk is \emph{$d$-controlling} (or \emph{$d^-$-controlling}, or \emph{$d^+$-controlling}) if every face it controls has length $d$ (or at most $d$ or at least $d$ respectively). In addition, we say a $(d_1,\ldots,d_t)$-walk is \emph{maximal} if it is $4^-$-controlling, but the edges of $C$ immediately before and after it control $5^+$-faces.

Let $f$ be a $7^+$-face. We use $s_0(f)$ to denote the number of vertices of $f$ that is not on the $4^-$-controlling walks of $f$, and $s_1(f)$ be the number of semi-rich $4^+$-vertices plus the number of $4^-$-faces with at least two $4^+$-vertices controlled by $(3,4^+)$-edges and the number of $4^-$-faces with at least three $4^+$-vertices controlled by $(4^+,4^+)$-edges.  We also let $t_i(f)$ for $i\ge 0$ be the number of maximal $4^-$-controlling walks with $i$ internal vertices (when each edge of $f$ controls a $4^-$-face, we will regard the number of internal vertices of the walk to be $d(f)-2$).  Then
\begin{equation}\label{s0ti}
s_0(f)+\sum_{i\ge 0} (i+2)t_i(f)=d(f).
\end{equation}

If it is clear from the context, we usually write $s_0, s_1, t_i$ for $s_0(f), s_1(f), t_i(f)$, respectively.

For our main reducible configuration, we are concerned about particular types of $(d_1,\ldots,d_t)$-paths. 

\begin{definition}\label{def:special}
A $(4^-,4, \ldots,4, 3)$-path $u_1,\ldots,u_t$ is \emph{special} if,
\begin{itemize}
\item for each $i < t$, there is a path $P_i=u_{i} \ldots u_{i+1}$ such that all internal vertices of $P_i$ have degree 3, and
\item $V(P_i)\cap V(C)=\{u_i, u_{i+1}\}$ and $V(P_i)\backslash V(C)\not=\emptyset$.
\end{itemize}

\end{definition}

\begin{figure}[ht]
\includegraphics[scale=0.12]{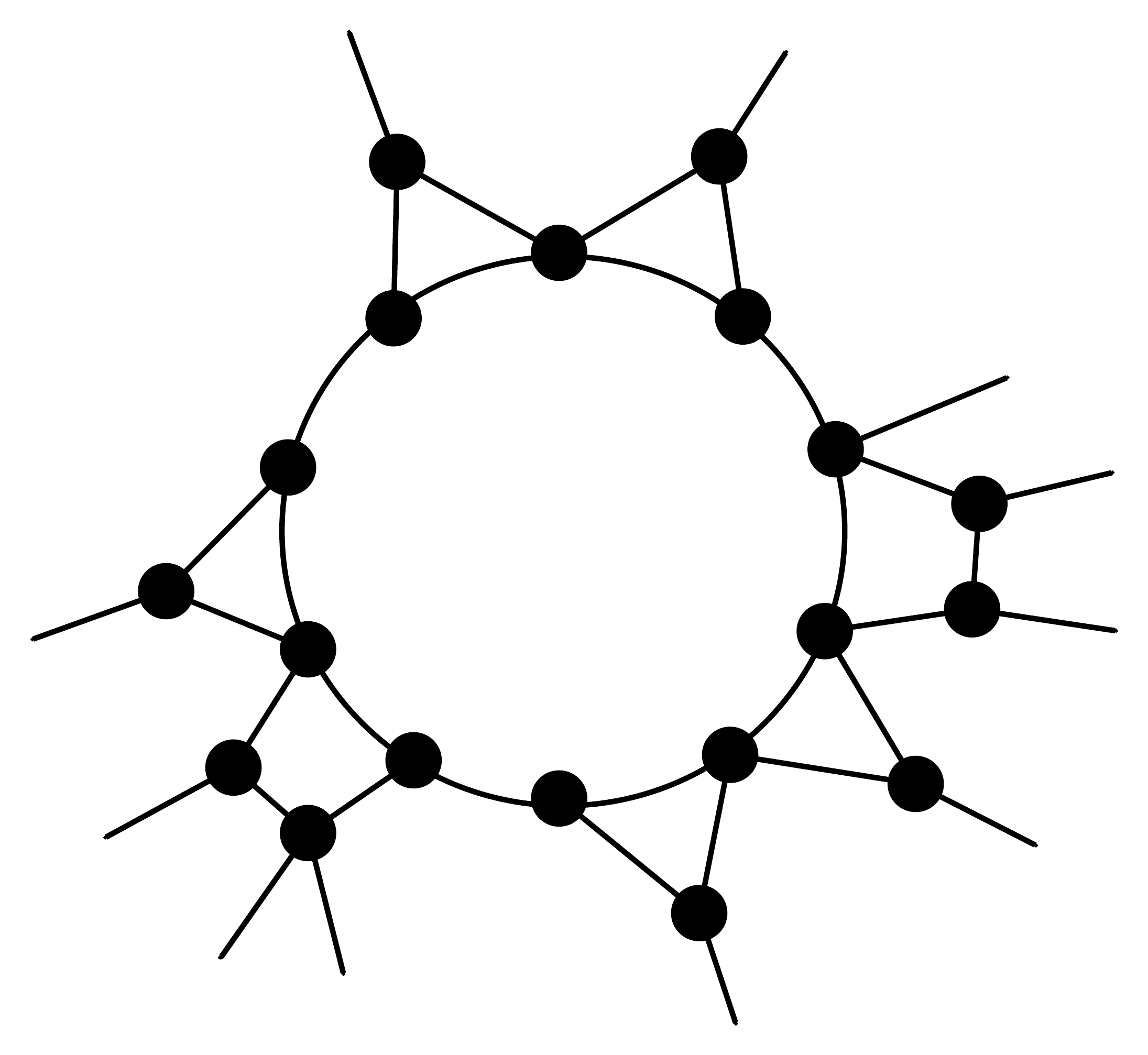}
\caption{A $10$-cycle with a $3$-controlling special $(3,4,3)$-path, a special $(4, 4, 4, 3)$-path, and a maximal $(3, 4, 3)$-path that is not special but has a special $(4,3)$-path using two of its vertices.} \label{fig:special}
\end{figure}

In our proofs, we will generally consider the case that the $P_i$ of a special $(4^-,\ldots,3)$-path $u_1,\ldots,u_t$ that are formed along the boundary of the face controlled by $u_i u_{i+1}$ for $i \le t-1$, but that need not be the case in general.

By applying Lemma~\ref{near-2-degenerate} to one of the subcycles, we find that a cycle of 3-vertices with a chord is reducible as long as at least one vertex has an extenal neighbor. The condition of an external neighbor is required to apply Lemma~\ref{near-2-degenerate}. This condition is also necessary in general, since $K_4$ is a 4-cycle of 3-vertices with two chords.   However, Lemma~\ref{near-2-degenerate} also applies to a larger family of cycles. The chord in a cycle of 3-vertices is extended to allow a special $(3,4,\ldots,4,3)$-path, although we still require that this special path yields a vertex with an external neighbor in a particular way. It also allows for the cycle to have semi-rich 4-vertices as long as these 4-vertices belong to special $(4^-,\ldots,3)$-paths that obey an orientation around the cycle.

\begin{lem}\label{theta-graph-extension}
Assume that $G$ contains no adjacent $4^-$-cycles and is not DP-$3$-colorable such that every proper subgraph of $G$ is DP-$3$-colorable.  Let $f$ be a $7^+$-face bounded by a cycle in $G$ and $f$ contains a special $(3,4,...,4, 3)$-path $P$.  
If for every controlled $4^-$-face $f'$ of $f$,  each vertex of $f'$ not on $f$ has a neighbor outside the subgraph formed by $f$ and the controlled $4^-$-faces of $f$,  then one of the following is true
\begin{enumerate}[(i)]
\item $f$ contains a $5^+$-vertex or a rich $4$-vertex.
\item $s_1(f)\ge 1$, and $s_1(f)=1$ only if $f$ contains a $(3,4,\ldots,4,3)$-path so that exactly one $(4,4)$-edge controls a $(4,4,4^+)$-face or a $(4,4,4^+,3^+)$-face.
\item Furthermore, if all maximal paths other than $P$ are $(3^+,3^+)$-paths, then $f$ contains at least one semi-rich $4$-vertex, and if exactly one, then it is on a $4^-$-face with at least two $4^+$-vertices.
\end{enumerate}
\end{lem}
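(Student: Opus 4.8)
The plan is to argue by contradiction: assuming the conclusion fails, I would exhibit inside $G$ a subgraph reducible by Lemma~\ref{near-2-degenerate} with $k=3$, which contradicts the minimality of $G$. So suppose (i) fails and, moreover, (ii) fails in the stated sense (and, while proving the addendum (iii), that (iii) fails as well); then every vertex of $f$ is a $3^-$-vertex or a non-rich $4$-vertex.

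Let $C$ be the cycle bounding $f$, let $P=u_1u_2\cdots u_t$ be the given special path with associated paths $P_1,\ldots,P_{t-1}$ joining consecutive $u_i$, and let $f_1,\ldots,f_{t-1}$ be the controlled $4^-$-faces, so $\partial f_i=u_iu_{i+1}\cup P_i$. The subgraph I would use is $D$, the cycle bounding the union of $f$ with all of its controlled $4^-$-faces: traversing $D$ one meets $u_1$, the interior vertices of $P_1$, then $u_2$, the interior vertices of $P_2$, and so on through $u_t$, and then the portion of $C$ running from $u_t$ back to $u_1$ that avoids $u_2,\ldots,u_{t-1}$ (enlarged, off $P$, by any further controlled $4^-$-faces). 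Since $G$ has no adjacent $4^-$-cycles these faces are pairwise edge-disjoint, so $D$ is a genuine cycle with $V(D)\supseteq V(C)\cup\bigcup_i(\text{interior of }P_i)$; set $H=V(D)$. Two observations make $H$ a plausible reducible subgraph: (a) each $u_i$ has all of its at most four neighbours on $D$ — its two $C$-neighbours, together with its remaining (one or two) neighbours, which lie on the paths $P_j$ and hence on $D$ — so $A(u_i)=L(u_i)$ and $|A(u_i)|=3$; and (b) every interior vertex of a $P_j$, and every vertex of a controlled $4^-$-face not on $f$, has by hypothesis a neighbour outside the subgraph formed by $f$ and its controlled $4^-$-faces, hence a neighbour in $G-H$, while every other vertex of $D$ is (as (i) fails) a $3$-vertex with at most one neighbour in $G-H$.

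I would then feed $H$ into Lemma~\ref{near-2-degenerate}. Take $v_1=u_i$ for a suitable $i$, so $|A(v_1)|=3$, and for $v_\ell$ a $3$-vertex of $D$ adjacent to $v_1$ — an interior vertex of a $P_j$ incident to $u_i$ — with $1\le|A(v_\ell)|<3$ (if no such vertex exists, then $H$ turns out to be reducible for simpler reasons, e.g.\ because it is essentially unconstrained by $G-H$, whence a DP-$3$-colouring of $H$, which exists by minimality, already extends the colouring of $G-H$); then $v_1v_\ell\in E(G)$, giving conditions (1)–(2). For (3), order the rest of $V(D)$ along the path $D-\{v_1,v_\ell\}$, starting at the endpoint adjacent to $v_1$ and interleaving so that each degree-$4$ vertex $u_k$ ($k\ne i$) is handled before at least one of $u_{k-1},u_{k+1}$: then every $3$-vertex of $D$ sees only its path-predecessor and its single $G-H$-neighbour, and each such $u_k$ sees only its path-predecessor and at most one of $u_{k-1},u_{k+1}$ — at most two already-handled neighbours in either case. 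Choosing $v_1$ among the extreme degree-$4$ vertices $u_2,u_{t-1}$ of $P$, and ordering the $u$'s roughly monotonically, makes this work when $f$ has few degree-$4$ vertices; in general the ordering has to be balanced against the whole $4^-$-controlling structure of $f$.

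I expect this last step — verifying condition (3), and arranging for an adjacent pair $v_1,v_\ell$ to satisfy the strict inequality in condition (1) — to be the main obstacle, and the proof to reduce to a case analysis of how $4^+$-vertices and $4^-$-faces are arranged around $f$, bookkept by $s_0(f),s_1(f),t_i(f)$ and identity~\eqref{s0ti}. The point is that a semi-rich $4$-vertex, or a controlled $4^-$-face carrying two or three $4^+$-vertices, is precisely what provides the extra slack — a free colour, or a neighbour forced onto a $5^+$-face — that makes the ordering go through; so when $s_1(f)=0$ the reduction always succeeds and produces the contradiction, forcing (ii), and the extra hypothesis of (iii), which essentially confines the $4$-vertices of $f$ to $P$, likewise forces a semi-rich $4$-vertex of the stated type. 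The delicate part is to check that the reduction fails \emph{exactly} in the listed exceptional configurations — a single $(4,4)$-edge controlling a $(4,4,4^+)$- or $(4,4,4^+,3^+)$-face, or a single semi-rich $4$-vertex on a $4^-$-face with at least two $4^+$-vertices — so that no conclusion stronger than (ii), respectively (iii), can be claimed; most of the casework concentrates here.
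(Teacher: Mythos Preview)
Your strategy—assume (i)–(iii) all fail and apply Lemma~\ref{near-2-degenerate} to the vertices of $C$ together with outer vertices of controlled $4^-$-faces—is the paper's approach, but your choice of subgraph and your ordering diverge from the paper's in a way that leaves the proof incomplete at exactly the crucial step. The paper does \emph{not} take $H=V(D)$; it takes $S=V(C)$ and inserts the outer vertices of only those controlled faces whose controlling edge carries a $4$-vertex (plus the single edge $u_ku_1$ at the $3$-end of the special path, which supplies $v_\ell$). For a $(3,3)$-edge that controls a $4^-$-face, the outer vertex is deliberately left in $G-S$. This selectivity is the whole point: it makes the linear order $u_1,\ldots,u_k$ (with the insertions) satisfy condition~(3) of Lemma~\ref{near-2-degenerate} uniformly—each poor $4$-vertex finds both of its off-$C$ neighbours inserted immediately around it, while each $3$-vertex has exactly one neighbour in $G-S$ and one earlier neighbour. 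Your $H=V(D)$ turns every controlled edge of $C$ into a chord of $D$, and you then owe, for each such chord, a check that its later endpoint does not accumulate three earlier-or-outside neighbours; you concede this is ``the main obstacle'' and defer it to an unspecified case analysis. That analysis \emph{is} the proof, and the paper's selective $S$ sidesteps it entirely with a single explicit ordering.

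Two smaller points. The paper takes $v_1=u_1$ at an \emph{end} of the special path (the cycle is relabelled so that $u_ku_1$ is its terminal $(3,4^-)$-edge), and $v_\ell$ is the last inserted outer vertex of the face on $u_ku_1$; there is no contingency ``if no such $v_\ell$ exists'' to handle. And your final paragraph misreads the statement: there is no ``exactness'' to verify. One only needs the reduction to succeed whenever (i)–(iii) all fail, not to show that it fails when one of them holds.
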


\begin{proof}
Suppose that none of (i), (ii) and (iii) is true, that is, $G$ contains no $5^+$-vertices or rich $4$-vertices, or $s_1(f)=0$, or $s_1(f)=1$ and each $(4,4)$-edge of $f$ controls a $4^-$-face without other $4^+$-vertices, or all maximal paths except $P$ are $(3,3)$-paths, or all maximal paths except $P$ are $(3,3)$-paths and one special $(3,4)$-path. Then all $4^+$-vertices on $f$ are poor or semi-rich $4$-vertices, and there is at most one semi-rich $4$-vertex, and if $f$ is adjacent to a $4^-$-face $f'$ that is controlled by a $(3,4)$-edge and has one $4^+$-vertex not on $f$, then $f$ contains no semi-rich $4$-vertex, and $f'$ is controlled by a $(3,4,\ldots,3)$-path. In this latter case, we shall partition the path into a $3$-vertex and a special $(4, \ldots, 3)$-path.  So we may assume that $f$ consists of $3$-vertices and special paths with at most one semi-rich $4$-vertex.

Let $f=u_1u_2\ldots u_k$. We create an ordering of vertices on $f$ and on some of the controlled faces by edges of $f$ as follows:
\begin{itemize}
\item The initial list is $u_1, u_2, \ldots, u_k$ so that $u_ku_1$ is a $(3,4^-)$-edge on a special $(3,\ldots,3)$-path.
\item For each $i\in [k-1]$, let $P_i=u_i\ldots u_{i+1}$ on the $4^-$-face controlled by $u_iu_{i+1}$, then we insert the vertices of $P_i$ to $S$ following $u_i$, unless $u_iu_{i+1}$ is on a $(3,3)$-path.
\item Let $P_k$ be the path from $u_k$ to $u_1$ on the controlled $4^-$-face by $u_ku_1$. Insert the vertices of $P_k$ to $S$ following $u_k$.

\item We assume that the semi-rich $4$-vertices of the special $(4,\ldots, 3)$-path is the lowest-indexed vertex among the vertices of the path in the list. (Note that this is valid since  $u_ku_1$ may be selected in opposite orderings.)
\end{itemize}

We shall denote the final list as $S$.  Observe that $S$ has no repeated vertices, $u_1$ has no neighbor outside of $S$, and the last vertex has a neighbor outside of $S$.
By the construction that each vertex in $S-V(f)$ has degree $3$, each vertex in $S$ has at most two neighbors in $G-S$ and in earlier vertices in $S$. So by Lemma~\ref{near-2-degenerate}, a DP-3-coloring of $G-S$ can be extended to $G$, a contradiction.
\end{proof}

\subsection{A brief discussion of the discharging}
\
\noindent

Throughout this paper, let $G$ be a minimum counterexample in each of the following sections. We use $\mu(x)$ denote the intial charge of a vertex or face $x$ in $G$ and $\mu^*(x)$ to denote the final charge after the discharging procedure. In all of our proofs, we use $\mu(v) = 2d(v) - 6$ for each vertex $v$ and $\mu(f) = d(f) - 6$ for each face $f$. Then by Euler formula,
$$\sum_{v\in V(G)} (2d(v)-6)+\sum_{f\in F(G)} (d(f)-6)=-12.$$

By Lemma~\ref{minimum}, for DP-$3$-coloring, only faces start with negative charge.  We move charge around and argue that  every vertex and face ends up with non-negative charge. Since charge is only moved, this contradiction proves our conclusion.

Since only $5^-$-faces have negative initial charge, we use the following rule to give them enough charge.

\begin{quote}
(R1) Each $5^-$-face gets $1$ from each $4^+$-vertex on it, and gets its rest evenly through the edges from its adjacent $7^+$-faces.
\end{quote}

Note that a $5^-$-face $f$ may share more than one edge with a $7^+$-face, which in particular could happen when a $7^+$-face contains cut vertices.  In this case, we require that $f$ gets the rest evenly {\em through the edges} from the adjacent $7^+$-faces.

 Now $4^+$-vertices on a $7^+$-face and some $4^-$-faces may  have surplus charge, we will let them give to the $7^+$-faces following somewhat different rules (R2a), (R2b) et al  from one setting to another, which will be specified in each of the subsequent sections. Still, some very particular $7^+$-faces may still have negative charges, and we create to a bank to send them the charges, which is (R3a), (R3b) et al.  Not every setting needs to use the bank though.

\section{Planar graphs without cycles of lengths in $\{3,6,7,8\}$}

As a warm-up, we prove Theorem~\ref{3678}. The discharging rules used in later sections are similar in spirit to the ones used in the proof, although more complicated situations arise.

\begin{thm}\label{3678}
Every planar graph without 3-, 6-, 7-, and 8-cycles is DP-$3$-colorable.
\end{thm}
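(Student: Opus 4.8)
The plan is to argue by contradiction via the discharging method outlined in Section~\ref{reducible}. Let $G$ be a minimum counterexample: a planar graph with no $3$-, $6$-, $7$-, or $8$-cycles that is not DP-$3$-colorable, with every proper subgraph DP-$3$-colorable. By Lemma~\ref{minimum} we have $\delta(G)\ge 3$. The absence of $3$-cycles means there are no $4^-$-faces controlled in the delicate way handled by the general machinery; in fact every face has length $4$, $5$, or at least $9$ (lengths $3$, $6$, $7$, $8$ are all forbidden as cycle lengths, hence as face lengths since $G$ is $2$-connected after the usual reduction). So the situation is genuinely a warm-up: the only negative-charge faces are $4$-faces (charge $-2$) and $5$-faces (charge $-1$), and these must be fed by adjacent $9^+$-faces and by $4^+$-vertices via rule (R1).

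First I would establish the structural reducible configurations. The key one is: no $9^+$-face $f$ can be adjacent to ``too many'' $4$- and $5$-faces through $3$-vertices, because an even cycle of $3$-vertices with an appropriate chord (or special path) is reducible by Lemma~\ref{near-2-degenerate}/Lemma~\ref{theta-graph-extension}. Concretely, I would show that a $4$-face all of whose vertices are $3$-vertices cannot be adjacent (across an edge) to another $4^-$-face, and more generally that long $3$-controlling walks on a $9^+$-face are forbidden. These reductions are exactly the ``near-$2$-degenerate subgraph'' arguments: one extracts a subgraph $H$ consisting of a short cycle of $3$-vertices together with a special path witnessing an external neighbor, orders its vertices to satisfy conditions (1)–(3) of Lemma~\ref{near-2-degenerate}, and extends the coloring. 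Since there are no triangles, the ``special path'' reduces to a genuine chord or a path through degree-$3$ vertices on the other controlled small face, so the bookkeeping is much lighter than in the general lemma.

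Next comes the discharging itself. Assign $\mu(v)=2d(v)-6$ and $\mu(f)=d(f)-6$; by Euler's formula the total is $-12$. Apply (R1): each $4^-$-face (here: each $4$- or $5$-face) takes $1$ from every $4^+$-vertex on it and splits its remaining deficit evenly through its edges among adjacent $9^+$-faces. I would then specify a rule (R2a) governing how surplus charge at $4^+$-vertices on $9^+$-faces flows to those faces, and possibly a bank rule (R3a) for stubborn $9^+$-faces. The verification splits into: (a) every vertex ends with $\mu^*(v)\ge 0$ — a $3$-vertex keeps its $0$, a $4^+$-vertex $v$ has $\mu(v)=2d(v)-6\ge d(v)$ so it can afford to give $1$ to each incident $4^-$-face and still be nonnegative, using that $v$ is incident to at most $d(v)$ small faces; (b) every $4$- or $5$-face ends at exactly $0$ by design of (R1); (c) every $9^+$-face $f$ ends nonnegative — this is where one uses the structural lemmas to bound the number of $4^-$-faces $f$ must feed and the amount each demands, against the surplus $d(f)-6$ plus whatever it receives from its own $4^+$-vertices. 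Equation~\eqref{s0ti} and the counts $s_0,s_1,t_i$ organize this final tally: a maximal $3$-controlling walk with $i$ internal vertices forces $f$ to pay a bounded amount across $i+1$ edges, and the reducibility results cap the lengths and multiplicities of such walks.

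The main obstacle I expect is part (c): showing each $9^+$-face survives. The worst case is a $9^+$-face surrounded almost entirely by $4$-faces attached through $3$-vertices, which would drain charge at a rate close to $1$ per edge against a budget of only $d(f)-6$ per $d(f)$ edges. Here one must invoke the reducible configurations to show such saturation is impossible — e.g. that between consecutive controlled $4^-$-faces there must be enough ``gap'' vertices ($s_0$ large) or enough $4^+$-vertices contributing charge, so that the averaged demand per edge drops below $(d(f)-6)/d(f)$. Getting the constants in (R1)/(R2a) right so that this inequality holds for every $d(f)\ge 9$ simultaneously — and checking the small cases $d(f)=9,10,11$ by hand, where the ratio $(d(f)-6)/d(f)$ is smallest — is the crux of the argument; everything else is routine.
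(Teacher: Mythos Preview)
Your plan is much heavier than what is needed, and it misidentifies where the structural constraints come from. The paper's actual proof uses only (R1) --- no (R2), no (R3), no bank, and crucially no reducibility via Lemma~\ref{near-2-degenerate} or Lemma~\ref{theta-graph-extension}. The observation you are missing is purely a cycle-length count: if two $5^-$-faces share an edge, their outer boundary is a $6$-, $7$-, or $8$-cycle unless they share an additional vertex off that edge; since $3$-cycles are also excluded, the only possibility is two $5$-faces sharing an extra $4^+$-vertex, and moreover a $5$-face cannot be adjacent to two $5$-faces. From this alone, a $4$-vertex (resp.\ $5$-vertex) lies on at most two (resp.\ three) $5^-$-faces, so the vertex side is immediate under (R1); and on a $9^+$-face $f$ every maximal $5^-$-controlling walk costs at most $\tfrac12$ in total --- a single $(3,3)$-edge controlling a $(3,3,3,3)$-face is the extremal case, while any longer walk forces the controlled faces to be $5$-faces carrying a $4^+$-vertex and hence costing $0$. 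With $\sum_i t_i \le d(f)/2$ from~\eqref{s0ti} this gives $\mu^*(f)\ge d(f)-6-\tfrac14 d(f)=\tfrac34(d(f)-8)\ge 0$ at once; no hand-checking of $d(f)=9,10,11$ is required.

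Two specific missteps. First, your drain estimate ``close to $1$ per edge'' is off by a factor of two: under (R1) the worst a single edge can cost is $\tfrac12$, and with the adjacency observation the bound drops to $\tfrac12$ per entire maximal walk. Second, your plan to derive facts like ``a $4$-face of $3$-vertices cannot be adjacent to another $4^-$-face'' from the near-$2$-degenerate machinery is misdirected: these statements follow straight from the forbidden cycle lengths, not from any extension lemma, and Lemmas~\ref{near-2-degenerate} and~\ref{theta-graph-extension} will not deliver them (an even cycle of $3$-vertices without a chord is \emph{not} reducible in DP-coloring). Your approach could be pushed through, but only once you rediscover the cycle-length observation --- at which point the extra rules and reducibility arguments become unnecessary.
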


\begin{proof}
We first observe that if two $5^-$-faces are adjacent, then they must be two $5$-faces sharing a common edge and a $4^+$-vertex, and a $5$-face cannot be adjacent to two $5$-faces.

Note that a $4, 5$ and $6^+$-vertex $v$ is incident with at most two, three and $d(v)$ faces of degree at most five, respectively. By (R1), each vertex has nonnegative final charge, and $5^-$-faces receive enough charge. We just need to show that the $6^+$-faces, which in this case are $9^+$-faces, end with non-negative charge.

Let $f$ be a $9^+$-face. Note that $f$ gives $0$ to each adjacent $5$-face that is adjacent to a $5$-face, since the $5$-face gets at least $1$ from its incident $4^+$-vertices.  Note also that $f$ gives at most $\frac{1}{2}$ to each $5^-$-face controlled by a $(3,3)$-path, $\frac{1}{4}$ to each $5^-$-face controlled by a $(3,4^+)$-path, and nothing to faces controlled by $(4^+,4^+)$-paths. Thus, $f$ gives away at most $\frac{1}{2}$ to the $5^-$-faces controlled by each maximal $5^-$-controlling walk, and by \eqref{s0ti}, $f$ gives at most $\frac{1}{2}\sum_{i}t_i\le \frac{1}{4}d(f)$ to the adjacent $5^-$-faces. So,
$\mu^*(f) \ge d(f) - 6 - \frac{1}{4}d(f) = \frac{3}{4} \left(d(f) - 8 \right)> 0. $
\end{proof}

\section{Planar graphs without $3$-, $5$-, $6$-cycles}\label{356}

\begin{thm} \label{Thm:356} Every planar graph $G$ without 3-, 5-, or 6-cycles is DP-$3$-colorable. \end{thm}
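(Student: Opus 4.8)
The plan is to argue by contradiction via discharging: let $G$ be a minimal counterexample, so $G$ is a planar graph with no $3$-, $5$-, or $6$-cycles that is not DP-$3$-colorable, while every proper subgraph is. By Lemma~\ref{minimum}, $\delta(G)\ge 3$. Since $G$ has no $3$-, $5$-, or $6$-cycles, every face has length $4$ or at least $7$, and no two $4$-faces share an edge (a shared edge would create a $6$-cycle or force a multi-edge) — more strongly, $4^-$-cycles are not adjacent, so Lemma~\ref{theta-graph-extension} is available. Assign the charges $\mu(v)=2d(v)-6$ and $\mu(f)=d(f)-6$ as in the paper; only $4$-faces start negative (charge $-2$), and by Euler the total is $-12$. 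The goal is to redistribute so that every vertex and every $7^+$-face ends nonnegative while $4$-faces reach $0$, contradicting the negative total.

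The discharging rules will follow the template in Section~\ref{reducible}: rule (R1) has each $4$-face take $1$ from each incident $4^+$-vertex and the remainder evenly through its edges from adjacent $7^+$-faces; then a rule (R2*) moves surplus from $4^+$-vertices and from certain $4$-faces to the $7^+$-faces, and if needed a bank rule (R3*). First I would check vertices: a $3$-vertex gives nothing and keeps charge $0$; a $d$-vertex with $d\ge 4$ has charge $2d-6$ and is incident to at most $\lfloor d/2\rfloor$ or so $4$-faces (since $4$-faces are pairwise non-adjacent around a vertex they cannot all be consecutive), so after giving $1$ to each incident $4$-face it retains enough to also help $7^+$-faces; this needs a short case analysis for $d=4,5$ and a clean bound for $d\ge 6$. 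Next I would handle a $7^+$-face $f$: the quantity $f$ must supply to $4$-faces is controlled by the structure of its $4^-$-controlling walks, and by identity~\eqref{s0ti} the number of maximal such walks is bounded by $d(f)$. A $4$-face controlled by a $(3,3)$-edge needs the full $\tfrac12+\tfrac12$ from its two neighbouring $7^+$-faces (it has no $4^+$-vertex to give $1$), a $4$-face controlled by a $(3,4^+)$-edge needs less, and one controlled by a $(4^+,4^+)$-edge needs nothing from $f$. The heart of the argument is to show that a maximal $4^-$-controlling walk cannot consist entirely of $(3,3)$-edges with both endpoints also low-degree unless the configuration is reducible — this is exactly where Lemma~\ref{theta-graph-extension} (special paths) and the "external neighbor" hypothesis enter, ruling out the worst walks and forcing either a $5^+$-vertex, a rich/semi-rich $4$-vertex, or a shorter walk.

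After these reductions I expect a bound of the shape $\mu^*(f)\ge d(f)-6-\tfrac12\sum_i t_i(f)\ge d(f)-6-\tfrac14 d(f)=\tfrac34(d(f)-8)$, which is nonnegative once $d(f)\ge 8$; the faces of length exactly $7$ will need separate, slightly more careful accounting, using that a $7$-face cannot be adjacent to too many $4$-faces controlled by $(3,3)$-edges without producing a reducible special path — here the bank rule (R3*) may be invoked to patch the few remaining deficient $7$-faces, with the bank's solvency guaranteed by counting the $4$-faces and their $4^+$-vertices via $s_1(f)$. The main obstacle, and the place I would spend the most effort, is precisely the tight analysis of $7^+$-faces whose controlling walks are rich in $(3,3)$-edges: one must extract from the absence of short cycles and from Lemma~\ref{theta-graph-extension} a guarantee that such a face either has surplus-producing $4^+$-vertices on it (semi-rich vertices contributing through $s_1(f)$) or that a special $(3,4,\dots,4,3)$-path exists giving a reducible near-$2$-degenerate subgraph. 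Everything else is bookkeeping with~\eqref{s0ti}; getting the constants to close on the $7$-faces is the crux.
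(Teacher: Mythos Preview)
Your plan is essentially the paper's own proof: same initial charges, the same rule (R1), a rule (R2*) sending charge from rich/semi-rich $4$-vertices and from $4$-faces with several $4^+$-vertices to $7^+$-faces, the bound $\mu^*(f)\ge\tfrac34(d(f)-8)$ for $d(f)\ge8$, and the appeal to Lemma~\ref{theta-graph-extension} to close the $d(f)=7$ case via $s_1(f)$. One small correction: no bank rule (R3*) is needed here --- the paper's (R2a) alone (semi-rich $4$-vertices give $\tfrac14$, rich $4$-vertices and $5^+$-vertices give $\tfrac12$, and $(3^+,4^+,4^+,4^+)$-faces give $\tfrac14$ back) already makes every $7$-face nonnegative once Lemma~\ref{theta-graph-extension} guarantees a $5^+$-vertex, a rich $4$-vertex, $s_1\ge2$, or $s_1=1$ with $\sum_{i\ge2}i\,t_i\ge2$.
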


\begin{proof}
Besides (R1), we also use the following rules:

\begin{enumerate}[(R2a)]
\item every $7$-face gains $\frac{1}4$ from each incident semi-rich $4$-vertex,  $\frac{1}{2}$ from each incident rich $4$-vertex or $5^+$-vertex, $\frac{1}{4}$ from each adjacent $(3^+,4^+,4^+,4^+)$-face.
\end{enumerate}

We first check the final charges of vertices by (R1) and (R2a). By Lemma~\ref{minimum}, $\delta(G)\ge3$.  If $d(v)=3$, then $\mu^*(v)=\mu(v)=0$. Note that $v$ is incident with at most $\lfloor\frac{d(v)}{2}\rfloor$ $4$-faces. If $d(v)=4$, then $v$ gives $1$ to each incident $4$-face, $\frac{1}{4}$ to each incident $7$-face when $v$ is semi-rich, $\frac{1}{2}$ to each incident $7$-face when $v$ is rich.  So $\mu^*(v)\ge 2-\max\{1\cdot2, 1+\frac{1}{2}+\frac{1}{4}\cdot2, \frac{1}{2}\cdot4\}=0$. If $d(v)\ge 5$, then $v$ gives $1$ to each incident $3$-face, at most $\frac{1}{2}$ to each other incident face. So  $$\mu^*(v)\ge2d(v)-6-1\cdot\lfloor\frac{d(v)}{2}\rfloor-\frac{1}{2}\lceil\frac{d(v)}{2}\rceil \ge \frac{5}{4}d(v)-6>0.$$

Next we consider the faces. By (R1), $4$-faces have final charge at least $0$. Note that two $4$-faces in $G$ cannot share a common edge, since this would form a 6-cycle. Thus every $4$-face in $G$ is adjacent to four $7^+$-faces (not necessarily distinct). If $f$ is a $4$-face  with at most two $4^+$-vertices, then by (R1) $\mu^*(f)\ge0$.  Otherwise, by (R2a) $\mu^*(f)\ge 4-6+\min\{1\cdot3-\frac{1}{4}\cdot4, 1\cdot4-\frac{1}{4}\cdot4\}=0$.
Since $G$ contains no $5$- or $6$-faces, we only need to check the final charge of $7^+$-faces.

Let $f$ be a $7^+$-face. By (R1), $f$ only needs to send $\frac{1}{2}x$ to each adjacent $(3,3,3,3)$-face that shares $x$ edges with $f$ and $\frac{1}{4}y$ to each adjacent $(3,3,3,4^+)$-face that shares $y$ edges with $f$ (note that $x,y$ could be more than one since $f$ could contain cut-vertices thus may not be a cycle). Thus $f$ sends at most $\frac{1}{2}$ to all the $4$-faces controlled by a maximal $4$-controlling walk of $f$.

When $d(f)\ge 8$, by using \eqref{s0ti}, we have $\mu^*(f)\ge d(f)-6-\frac{1}{2}\sum_{i=0}^k t_{i}\ge d(f)-6-\frac{1}{4}d(f)=\frac{3}{4}(d(f)-8)\ge 0.$

Let $d(f)=7$. Note that for each element counted in $s_1$, we have either a semi-rich $4$-vertex, from which $f$ gains $\frac{1}{4}$, or a $4$-face with at least two $4^+$-vertices  controlled by a $(3,4^+)$-edge, to which $f$ gives $0$, or a $4$-face with at least three $4^+$-vertices controlled by a $(4^+,4^+)$-edge, from which $f$ gains $\frac{1}{4}$. So by using \eqref{s0ti}, we have
\begin{align*}
\mu^*(f)&\ge d(f)-6-(\frac{1}{2}\sum_{i=0}^kt_{i}-\frac{1}{4}s_1)
\ge d(f)-6-\frac{1}{4}(d(f)-s_0-\sum_{i=0}^ki\cdot t_{i}-s_1)\\
&=\frac{3}{4}(d(f)-8)+\frac{1}{4}(s_0+s_1+\sum_{i=0}^ki\cdot t_{i}).
 \end{align*}

Because $f$ contains no $3$-cycles, $f$ must be a cycle.  By parity, $s_0+\sum_{i=0}^ki\cdot t_{i}\ge1$. Note that $f$ has at most three maximal paths.  We may assume that $f$ has a special $4$-controlling $(3,\ldots,3)$-path, for otherwise $\mu^*(f)\ge7-6-\frac{1}{4}\cdot3>0$. Since $G$ contains no $3,5,6$-cycles, each of the vertices (not on $f$) on $4$-faces adjacent to $f$ has a neighbor not on $f$ or adjacent $4$-faces of $f$. So by   Lemma~\ref{theta-graph-extension}, $f$ contains a $5^+$-vertex,  or a rich $4$-vertex,  or $s_1(f)\ge 2$, or $s_1=1$ and $\sum_{i=2}^k i\cdot t_{i}\ge 2$.  In the first two cases, by (R2a) $f$ gains $\frac{1}{2}$ from the $4^+$-vertex, so $\mu^*(f)\ge\frac{3}{4}(7-8)+\frac{1}{4}+\frac{1}{2}=0$. In the last two cases, $\mu^*(f)\ge \frac{3}{4}(7-8)+\frac{1}{4}\cdot 3=0$.
\end{proof}

\section{Planar graphs without $\{4,5,6,9\}$-cycles, or without $\{4,5,7,9\}$-cycles}

\begin{thm} \label{lem:45p9}
Every planar graph $G$ has no $\{4,5,9,p\}$-cycles with $p \in \{6,7\}$ is DP-$3$-colorable.
\end{thm}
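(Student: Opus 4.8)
The plan is to run the discharging scheme of Section~\ref{reducible}, using Lemma~\ref{theta-graph-extension} to dispatch the bad configurations on the short faces; the argument is closest in spirit to those for Theorems~\ref{3678} and~\ref{Thm:356}. Let $G$ be a minimum counterexample, so $\delta(G)\ge3$ by Lemma~\ref{minimum}, and put $\mu(v)=2d(v)-6$ and $\mu(f)=d(f)-6$, so Euler's formula sums these to $-12$ and only faces are deficient. Since $G$ has no $4$-, $5$-, $9$-, or $p$-cycle, the only faces with negative charge are the $3$-faces, each deficient by $3$ (here the $4^-$-faces are exactly the $3$-faces, and every other face has length at least $6$). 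First I would extract the structure I need: (a) two $3$-faces never share an edge, else a $4$-cycle; (b) gluing a $3$-face to the face on the other side of one of its edges yields a cycle one longer than that face, so -- as $3,4,5,9,p$ are all forbidden -- a $3$-face sees on its three edges faces of length in $\{7\}\cup\{10,11,\dots\}$ if $p=6$, and of length $\ge10$ if $p=7$; (c) on a $7^+$-face $f$ no maximal $4^-$-controlling walk has an internal $3$-vertex, because an internal vertex lies on both of its controlled $3$-faces, hence is adjacent to their two apexes plus its two neighbours on $f$ and so has degree $\ge4$ (if the two apexes coincide one gets a $4$-cycle instead); (d) hence every edge of $f$ controlling a $(3,3,3)$-face is by itself a maximal $4^-$-controlling walk, since neither of its neighbouring edges on $f$ can control a face of length $\le5$ without forcing a forbidden short cycle.

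For the rules I would keep (R1): each $3$-face takes $1$ from each incident $4^+$-vertex and distributes the rest of its deficit equally over its three edges, drawn through those edges from the incident $7^+$-faces; thus a $7^+$-face sends across an edge $1$, $\tfrac23$, $\tfrac13$, or $0$ according as the $3$-face on the other side is a $(3,3,3)$-, $(3,3,4^+)$-, $(3,4^+,4^+)$-, or $(4^+,4^+,4^+)$-face. Then I would add a rule (R2a) in which each $7^+$-face recovers $\tfrac12$ from each incident rich $4$-vertex or $5^+$-vertex, $\tfrac14$ from each incident semi-rich $4$-vertex, and a small amount from $3$-faces carrying surplus $4^+$-vertices (the objects counted by $s_1$); and, for the handful of short faces that can still come up short, a bank rule (R3a) redistributing the surplus of discharged $8^+$-faces. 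As the earlier sections warn, the exact constants -- and whether (R3a) is really needed -- may differ between $p=6$ and $p=7$.

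Checking vertices is routine: a $3$-vertex keeps $\mu^*(v)=0$, and by (a) a $k$-vertex meets at most $\lfloor k/2\rfloor$ $3$-faces, so the computations in Theorems~\ref{3678} and~\ref{Thm:356} give $\mu^*(v)\ge0$; $3$-faces end non-negative by (R1) and (R2a). The content is the $7^+$-faces. By (c) the internal vertices of any maximal $4^-$-controlling walk are $4^+$-vertices, so the $3$-faces it controls all carry $4^+$-vertices, and one checks that such a walk with $i$ internal vertices drains at most $\tfrac{i+3}{3}$ from $f$; feeding this into \eqref{s0ti} gives $\mu^*(f)\ge s_0+t_0+\sum_{i\ge1}\tfrac{2i+3}{3}t_i-6$ before (R2a), already non-negative once $d(f)\ge11$. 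The remaining faces are the $7$-faces (only when $p=6$) and the $10$-faces hugged by several $(3,3,3)$-faces: then a $(3,3,3)$-face across an edge $u_iu_{i+1}$ of $f$ makes $u_iu_{i+1}$ a special $(3,\dots,3)$-path (its apex-path has all internal vertices of degree $3$), so Lemma~\ref{theta-graph-extension} applies. Its external-neighbour hypothesis is exactly guaranteed by the forbidden cycles -- a neighbour of an apex lying on $f$ or on a controlled $3$-face would close a cycle of length in $\{4,5,6,7,8,9\}$, each excluded because $p\in\{6,7\}$ and $\{4,5,9\}$ are forbidden -- so by Lemma~\ref{theta-graph-extension} $f$ carries a $5^+$-vertex, a rich $4$-vertex, or enough semi-rich $4$-vertices or favourable $3$-faces; in each case (R2a), or in the tightest configurations the bank, returns at least the charge that left $f$, a contradiction. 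Hence $G$ cannot exist.

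The step I expect to fight with is that last one: matching the exact deficit of a short face against the exact return that Lemma~\ref{theta-graph-extension} guarantees. One must classify the maximal $4^-$-controlling walks of a $7$- or $10$-face by the degree types of their internal vertices and controlled $3$-faces, verify the external-neighbour hypothesis in each configuration, and then check that whichever of (i), (ii), (iii) the lemma delivers supplies -- via (R2a) -- at least what left the face; the leftover configurations (e.g.\ a short face surrounded by the maximal number of $(3,3,3)$-faces together with a rich vertex) are what force the design of the bank and the choice of its transfer amount, and that is the real technical work. Everything else parallels the proofs already given.
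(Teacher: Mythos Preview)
Your overall strategy matches the paper's --- (R1), a secondary rule returning charge from non-poor $4^+$-vertices and overfull $3$-faces, and a bank to patch the stragglers --- but two of your shortcuts break and one genuine idea is missing.

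First, the bound $\mu^*(f)\ge s_0+t_0+\sum_{i\ge1}\tfrac{2i+3}{3}t_i-6$ is \emph{not} non-negative for all $11$-faces: with $s_0=0$, $t_0=4$, $t_1=1$ it gives $-\tfrac13$. The paper treats $d(f)=11$ by a separate parity and case argument, invoking (the corollary of) Lemma~\ref{theta-graph-extension} when $s_0=s_1=0$. Second, your check of the external-neighbour hypothesis of Lemma~\ref{theta-graph-extension} is wrong as written: $8$-cycles are never forbidden here, and exactly one of $\{6,7\}$ is allowed depending on $p$. The conclusion does hold for $10$- and $11$-faces, but one has to observe that whenever the direct cycle obtained has length $3$, $8$, or $10$, a forbidden $4$- or $5$-cycle closes up instead; it is not the blanket exclusion of $\{4,\dots,9\}$ you claim.

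The substantive gap is in the $10$-face/bank step. There is a configuration the paper calls a \emph{bad $10$-face}: three special $(3,3)$-paths together with a $(3,4,4,3)$-path whose $(4,4)$-edge controls a $(4,4,4^+)$-triangle. This face has $s_1=1$ and sits precisely in the exception of clause~(ii) of Lemma~\ref{theta-graph-extension}, so that lemma does \emph{not} exclude it; after (R1) and any reasonable (R2) it is still short by $\tfrac13$, and the bank must pay. To show the bank is solvent one examines the apex $w$ of that $(4,4,4^+)$-triangle, and in the worst case $w$ is a poor $4$-vertex lying on two $3$-faces and contributes nothing to the bank directly. The paper closes this with a new reducibility statement (Lemma~\ref{reduce-4569}) which orders the vertices of the bad $10$-face together with a neighbouring $10$-face through $w$ and applies Lemma~\ref{near-2-degenerate} directly --- a two-face configuration invisible to Lemma~\ref{theta-graph-extension}, which only sees a single face and its fringe. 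So the ``real technical work'' is not merely tuning transfer amounts: you will need at least one ad-hoc reducible configuration spanning two faces, and your plan as stated does not anticipate that.
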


\begin{proof}
Let $f$ be a $10$-face.  Then the following $f$ needs special attention in the discharging(see Figure~\ref{fig:special-10}):
\begin{itemize}
\item $f$ is {\em special} if it either has four special $(3,3)$-paths and one non-special $(3,4)$-path, or has three special $(3,3)$-paths and one special $(3,4,3)$-path and a rich $4$-vertex.

\item $f$ is {\em poor} if it either has four special $(3,3)$-paths and one $(3,5^+)$-path that controls a $(3,3,5^+)$-face, or has three special $(3,3)$-paths and one $(3,4,5^+,3)$-path that controls one $(3,4,5^+)$-face and two $(3,3,4^+)$-faces, or has two special $(3,3)$-paths, one special $(3,4,3)$-path and one $(3,5^+,3)$-path that controls two $(3,3,5^+)$-faces.

\item $f$ is {\em bad} if it has three special $(3,3)$-paths and one $(3,4,4,3)$-path that controls two $(3,3,4)$-faces and one $(4,4,4^+)$-face.
\end{itemize}
\begin{figure}[ht]
\includegraphics[scale=0.15]{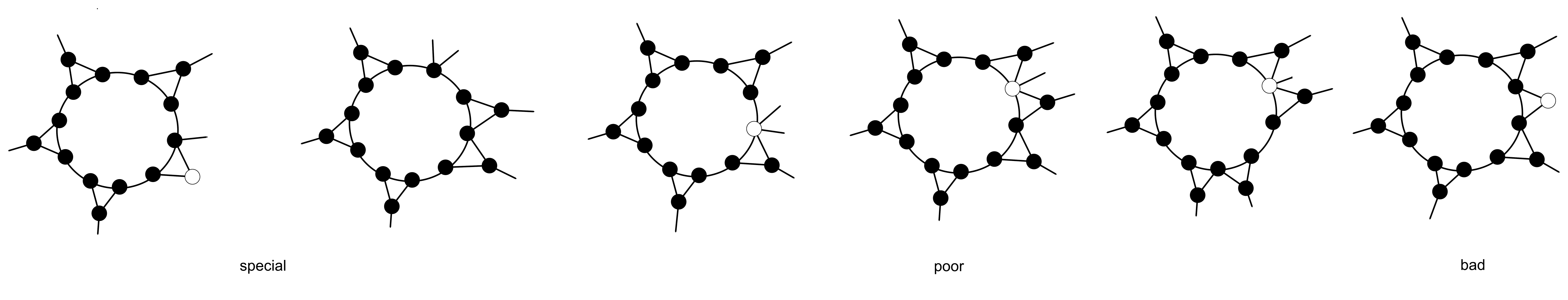}
\caption{The first two are special $10$-faces (the $4$-vertex on $f$ in the second graph could be in other locations), the middle three are poor $10$-faces, and the last one is a bad $10$-face.} \label{fig:special-10}
\end{figure}

Besides (R1), we also have the following discharging rules:
\begin{itemize}
\item[(R2b)] Let $v$ be a $4^+$-vertex on a $10^+$-face $f$.
\begin{itemize}
\item If $d(v)=4$ and $v$ is rich or semi-rich, then $v$ gives $\frac{1}{3}$ to $f$ if $f$ is special, and $\frac{1}{6}$ to $f$ otherwise.

\item If $d(v)=5$, then $v$ gives $\frac{2}{3}$ to $f$ if $f$ is poor, and $\frac{1}{3}$ to $f$ otherwise.
\item If $d(v)\ge6$, then $v$ gives $\frac{2}{3}$ to $f$.
\end{itemize}

\item[(R3b)] After (R1) and (R2b), each vertex and face give its remaining positive charge to the bank, each bad $10$-face gets $\frac{1}{3}$ from the bank.
\end{itemize}

We first check the final charge of vertices. If $d(v)=3$, then $\mu^*(v)=\mu(v)=0$. If $d(v)=4$, then by (R1) and (R2a) $v$ gives $1$ to each incident $3$-face and  at most $\frac{1}{3}$ to each incident $10^+$-face if $v$ is rich or semi-rich. So $\mu^*(f)\ge2\cdot4-6-\max\{1\cdot2, 1+\frac{1}{3}\cdot3, \frac{1}{3}\cdot4\}=0$.  Let $t$ be the number of incident $3$-faces of $v$. If $d(v)\ge5$, then $t\le \lfloor \frac{d(v)}{2} \rfloor$ since 3-faces cannot be adjacent. By (R1)， (R2b)， and (R2c)， $v$ gives $1$ to each incident $3$-face and at most $\frac{2}{3}$ to each incident $10^+$-face. Then
\[ \mu^*(v) \ge 2d(v) - 6 - t-\frac{2}{3}(d(v)-t)=\frac{4}{3}d(v)-6-\frac{1}{3}t\ge \frac{4}{3}d(v)-6-\frac{1}{3}\lfloor \frac{d(v)}{2} \rfloor\ge 0. \]

Next we check the final charge of faces. By (R1), each $3$-face has non-negative final charge.  Since $G$ contains no $4$-cycle, each $7$-face contains no cut-vertices thus must be a cycle. If a planar graph has no cycles of length $\{4,5,6,9\}$, then a 7-face can share at most one edge with $3$-faces. Thus a 7-face gives at most $1$ by (R1), and ends with non-negative charge. In addition, an 8-face cannot be adjacent to any 3-faces and thus gives no charge by (R1). Therefore, in this case, it suffices to show that $10^+$-faces end with non-negative charges. On the other hand, if a planar graph has no cycles of length $\{4,5,7,9\}$, then neither 6-faces nor 8-faces can be adjacent to any 3-faces. Thus, in this case we still only need to make sure that $10^+$-faces end with non-negative charges.

Let $f$ be a $10^+$-face.   If $P$ is a  maximal 3-controlling walk with at least one internal vertex, then $f$ gives at most $\frac{2}{3}$ to the first and last faces controlled by $P$ and $\frac{1}{3}$ to each other face controlled by $P$ (note that one face may appear more than once).
Thus, by (R2b), $f$ gives at most
$$\sum_{i\ge0}\left(\frac{4}{3}+\frac{1}{3}(i-1)\right)t_{i}-\frac{1}{3}s_1=\frac{1}{3}\sum_{i\ge0} (i+3)\cdot t_{i}-\frac{1}{3}s_1.$$

We get
\begin{align*}
\mu^*(f)&\ge d(f)-6-\frac{1}{3}\left(\sum_{i\ge0} (i+3)t_{i}-s_1\right)=\frac{1}{3}\left(2d(f)+s_0+s_1-\sum_{i\ge0} t_{i}\right)-6\\
&\ge \frac{1}{3}\left(2d(f)+s_0+s_1-\frac{1}{2}(d(f)-s_0)+\frac{1}{2}\sum_{i\ge1} (i\cdot t_{i})\right)-6\\
&=\frac{1}{2}(d(f)-12)+\frac{1}{6}\left(3s_0+2s_1+\sum_{i\ge1} i\cdot t_{i}\right).
 \end{align*}

If $d(f) \ge 12$, then this shows $\mu^*(f)\ge 0$.

Let $d(f)\in \{10,11\}$. We may assume that $f$ is cycle. For otherwise, $f$ contains a cut-vertex, then it must be a $10$-face consisting of a $3$-cycle and $7$-cycle, or a $11$-face consisting of a $3$-cycle and $8$-cycle, both of which cannot be adjacent to any $3$-face, thus has nonnegative charge. It follows that all maximal walks on $f$ are paths.

Let $f$ be a $11$-face with $s_0=s_1=0$ or a $10$-face in $G$.  Since $G$ contains no $\{4,5,6,9\}$-cycles or $\{4,5,7,9\}$-cycles, each of the outside vertices on the controlled $3$-faces by edges on $f$ has a neighbor not on $f$ nor on the controlled $3$-faces. As a corollary to Lemma~\ref{theta-graph-extension}, we have the following.

\begin{cor}\label{cor-4569}
Let $f$ be a $10$-face or a $11$-face with $s_0=s_1=0$ in $G$. If $f$ has a special $(3,4,...,4,3)$-path, then $f$ contains a $5^+$-vertex, or a rich $4$-vertex, or $s_1(f)\ge 2$, or $s_1(f)=1$ and $\sum_{i\ge 2} i\cdot t_i\ge 2$.
\end{cor}

Let $d(f)=11$.  Then if $s_0\ge 1$, then $\mu^*(f)\ge\frac{1}{6}\left(2s_1+\sum_{i=1}^k i\cdot t_{i}\right)\ge 0$.  So let $s_0=0$, that is, every vertex of $f$ is on a maximal path. Then by parity, $\sum_{i\ge 1} i\cdot t_i\ge 1$. Now if $s_1\ge 1$, then $\mu^*(f)\ge \frac{1}{6}\left(2s_1+\sum_{i=1}^k i\cdot t_{i}-3\right)\ge 0$. Let $s_1=0$. If $f$ contains a $5^+$-vertex, then $f$ gains at least $\frac{1}{3}$ from the $5^+$-vertex, so $\mu^*(f)\ge \frac{1}{2}(11-12)+\frac{1}{6}\cdot 1+\frac{1}{3}=0$. If not, then $f$ must contain a special $(3,4,...,4,3)$-path, for otherwise, all $3$-faces adjacent to $f$ are $(3,3,4^+)$-faces with the $4^+$-vertex not on $f$. Thus, $\mu^*(f)\ge11-6-\frac{2}{3}\cdot5>0$.


Let $d(f)=10$. Then $$\mu^*(f)\ge \frac{1}{6}\left(3s_0+2s_1+\sum_{i=1}^k i\cdot t_{i}-6\right).$$
So we may assume that $$3s_0+2s_1+\sum_{i\ge 1}i\cdot t_i\le 5.$$ It follows that $s_0\le 1$ and $s_1\le 2$.

First let $s_0=1$. Note that by parity, $\sum_i i\cdot t_i= 1$. So $f$ has one maximal $(3^+,4^+,3^+)$-path and three maximal $(3^+,3^+)$-paths. As $2s_1+\sum_i i\cdot t_i\le 2$, $s_1=0$. By Corollary~\ref{cor-4569}, $f$ either contains a $5^+$-vertex or is a special $10$-face. So by (R2) $f$ gets at least $\frac{1}{3}$ from the incident vertices. So $\mu^*(f)\ge\frac{1}{2}(10-12)+\frac{1}{6}(3+1)+\frac{1}{3}=0$.

Now let $s_0=0$. Then each vertex on $f$ is on a maximal path.

Let $s_1=0$. By Corollary~\ref{cor-4569} $f$ contains at least one poor $5^+$-vertex. So by parity $\sum_{i\ge1}i\cdot t_{i}\ge2$. If $\sum_{i\ge1}i\cdot t_{i}\ge4$, then $\mu^*(f)\ge\frac{1}{6}(4-6)+\frac{1}{3}=0$. Otherwise, by parity $f$ either has three $(3,3)$-paths and one $(3,4^+,4^+,3)$-path or two $(3,3)$-paths and two $(3,4^+,3)$-paths.  So $f$ is either a poor $10$-face or contains two poor $5^+$-vertices. In either case,  $\mu^*(f)\ge\frac{1}{6}(2-6)+\min\{\frac{1}{3}\cdot2, \frac{2}{3}\}=0$.

Let $s_1=1$. First we assume that $f$ contains a $5^+$-vertex. Observe that $\sum_{i\ge 1}i\cdot t_i\le 1$, for otherwise, by (R2b) $\mu^*(f)\ge \frac{1}{6}(2\cdot 1+2-6)+\frac{1}{3}=0$. Thus, by parity $f$ must contain five maximal  $(3^+,3^+)$-paths. Furthermore, $f$ is a poor $10$-face since $s_1=1$. So $\mu^*(f)\ge\frac{1}{6}(2\cdot1-6)+\frac{2}{3}=0$.
Now we consider that $f$ contains no $5^+$-vertices. Since $s_1=1$,  by Corollary~\ref{cor-4569} $f$ has a maximal $(3^+,4^+,...,3^+)$-path with at least two internal vertex. Note that $\sum_{i\ge1}i\cdot t_{i}\le3$. By parity $f$ must have one maximal $(3^+,4^+,4^+,3^+)$-path and three maximal $(3^+,3^+)$-paths, thus have to be a bad $10$-cycle. So $f$ gets $\frac{1}{3}$ from the bank by (R3), and $\mu^*(f)\ge\frac{1}{6}(2+2-6)+\frac{1}{3}=0$.

Let $s_1=2$. Then $\sum_{i\ge 1}i\cdot t_i\le 1$.  Since by parity $f$ cannot just have one maximal path with one internal vertex, $f$ must contain five maximal $(3^+,3^+)$-paths. Since $s_1=2$, $f$ has at least three special $(3,3)$-paths. By Corollary~\ref{cor-4569}, $f$ either contains a $5^+$-vertex, or has two semi-rich $4$-vertices, or is a special $10$-face.  In either case,  $\mu^*(f)\ge \frac{1}{6}(2\cdot2-6)+\min\{\frac{1}{3}, \frac{1}{6}\cdot2\}=0$.

Therefore, all $10$-faces have non-negative final charges.

\medskip

\begin{lem}\label{reduce-4569}
Let $f$ be a bad $10$-face in $G$ and $f_0=[uvw]$ be the $3$-face with three poor $4$-vertices controlled by the $(4,4)$-edge $uv$ on $f$. Let $f'$ be the 10-face containing the edge $vw$ (or by symmetry $uw$). If $s_0=0$ and the only $4^+$-vertices of $f'$ are  poor $4$-vertices, then none of the following holds:
\begin{itemize}
\item[(i)] $\sum_{i\ge1} i\cdot t_i(f')=2$ and $f'$ has a  special (3,3)-path.

\item[(ii)] $\sum_{i\ge1} i\cdot t_i(f')=4$ and $s_1(f')=1$.
\end{itemize}
\end{lem}

\begin{proof}
 Let the  vertices of $f'$ be $v_1, v_2, \ldots, w,v, v'$ in the cyclic order and the vertices on $f$ be $u, u_1, \ldots, u_8, v$ in the cyclic order  so that $vv'u_8$ be a triangle shared by $f$ and $f'$. Let $u'$ be the $3$-vertex adjacent to $u$ and $u_1$ and let $w'$ be the $3$-vertex adjacent to $w$ and $v_7$.

For (i), let $v_iv_{i+1}$ be a special $(3,3)$-path of $f'$ and let $z$ be the $3$-vertex adjacent to $v_i$ and $v_{i+1}$. We order the vertices on $f$ and $f'$ as follows:
$$
v_{i+1},\ v_{i+2},\ \ldots,\ v_7,\ w,\ v,\ u,\ u',\ u_1,\ \ldots,\ u_8,\ v',\ v_1,\ \ldots,\ v_i,\ z
$$

For (ii),  We order the vertices on $f$ and $f'$ as follows:
$$
w,\ v,\ u,\ u',\ u_1,\ \ldots,\ u_8,\ v',\ v_1,\ \ldots,\ v_7,\ w'
$$

If two consecutive vertices of $f'$ are on a special path and at least one of the two vertices has degree four, then we insert the third vertex on the controlled triangle between them. Let $S$ be the set of vertices on our final list. One can easily check that the first vertex has no neighbor outside of $S$ and the last vertex  has a neighbor in $G-S$ and each other vertex in $S$ has at most two neighbors in $G-S$ and earlier vertices in $S$.  Then by Lemma~\ref{near-2-degenerate}, a DP-3-coloring of $G-S$ can be extended to  $S$, a contradiction.
\end{proof}

Finally, we complete the proof by confirming  that the bank has non-negative final charge. Note that the bank only gives a charge $\frac{1}{3}$ to the bad $10$-face by (R3b). Let $f$ be a bad $10$-face in $G$ contains no $5^+$-verices. Let $f_0=[uvw]$ be the $(4,4,4)$-face adjacent to $f$ with the poor $4$-vertex $v$ not on $f$ and $f_i$ be the other two faces adjacent to $f_0$ for $i=1,2$.

\textbf{Case 1:} $d(v)\ge6$. Then $v$ gives at most $1\cdot\lfloor \frac{d(v)}{2} \rfloor+\frac{2}{3}\lceil \frac{d(v)}{2} \rceil\le\frac{5}{6}d(v)$ to all the incident faces. So $v$ gives at least $\frac{7}{6}d(v)-6=\frac{1}{6}d(v)+(d(v)-6)\ge\frac{1}{6}d(v)$ to the bank. Since there are at most $\lfloor \frac{d(v)}{2} \rfloor$ bad $10$-faces adjacent to the $3$-faces which is incident with $v$, the bank gives at most $\frac{1}{3}\lfloor \frac{d(v)}{2} \rfloor\le\frac{1}{6}d(v)$ to all the bad $10$-faces at $v$.

\textbf{Case 2:} $d(v)=5$. Then by definition, none of  $f_1$ and $f_2$ is a poor $10$-face. So $v$ gives at most $1\cdot2+\frac{1}{3}\cdot2+\frac{2}{3}=3\frac{1}{3}$ to all the incident faces and thus gives at least $\frac{2}{3}$ to the bank. Since there are at most two bad $10$-faces adjacent to the $3$-faces containing $v$, the bank gives at most $\frac{2}{3}$ to all the bad $10$-faces at $v$.

\textbf{Case 3:} $d(v)=4$ and $v$ is incident with exactly one $3$-face. Then by definition, none of $f_1$ and $f_2$ is a special $10$-face. So $v$ gives at most $1+\frac{1}{6}\cdot2+\frac{1}{3}=1\frac{2}{3}$ to all the incident faces and thus gives at least $\frac{1}{3}$ to the bank. Since there is exactly one bad $10$-face adjacent to the $3$-face which is incident with $v$, the bank gives at most $\frac{1}{3}$ to the bad $10$-face adjacent to the $3$-face containing $v$.

\textbf{Case 4:} $d(v)=4$ and $v$ is incident with two $3$-faces. Then for $f'\in \{f_1,f_2\}$, $\sum_{i\ge1}i\cdot t_{i}(f')\ge2$. By symmetry we only need to show that $f_1$ can give at least $\frac{1}{6}x$ to the bank, where $x\ge1$ denote the number of bad $10$-faces intersecting to $f_1$.  Then $f_1$ and $f_2$ can give $\frac{1}{6}\cdot2$ to the bank which can be regarded as the charge that the bank sends to $f$. Note that $s_1(f')\ge x$ and $\sum_{i\ge 1} i\cdot t_i(f')\ge x+1$.

Since $G$ contains no $\{4,5,6,9\}$-cycles or $\{4,5,7,9\}$-cycles, $f'$ is a $10^+$-face.
If $d(f')\ge11$, then $$\mu^*(f')\ge\frac{1}{2}(d(f')-12)+\frac{1}{6}\left(3s_0+2s_1+\sum_{i=1}^k i\cdot t_{i}\right)\ge\frac{1}{2}(11-12)+\frac{1}{6}(2x+2)=-\frac{1}{6}+\frac{1}{3}x\ge\frac{1}{6}x.$$

Let $d(f')=10$. If $x\ge 3$ or $s_0\ge 1$, then $$\mu^*(f')\ge \frac{1}{6}\left(3s_0+2s_1+\sum_{i=1}^k i\cdot t_{i}-6\right)\ge\frac{1}{6}(3s_0+2x+x+1-6)=\frac{1}{6}x+\frac{1}{6}(3s_0+2x-5)\ge\frac{1}{6}x.$$

So let $s_0=0$ and $x\le 2$. If $x=2$, then by parity $\sum_{i\ge1} i\cdot t_i\ge4$. So $\mu^*(f')\ge\frac{1}{6}(2\cdot2+4-6)=\frac{1}{3}=\frac{1}{6}x$.
So we let $x=1$. By parity, $\sum_{i\ge1} i\cdot t_i\in \{2,4,6\}$. For $\sum_{i\ge1} i\cdot t_i=2$, if all $4^+$-vertices of $f'$ are poor $4$-vertices, then by Lemma~\ref{reduce-4569}(i) $f'$ has no special $(3,3)$-path. So $\mu^*(f')\ge10-6-\frac{2}{3}\cdot3-\frac{2}{3}\cdot2>\frac{1}{6}$. Otherwise, $f'$ contains a semi-rich $4^+$-vertex, which gives $f'$ at least $\frac{1}{6}$ and $s_1\ge2$. So $\mu^*(f')\ge\frac{1}{6}(2\cdot2+2-6)+\frac{1}{6}=\frac{1}{6}$. For $\sum_{i\ge1} i\cdot t_i=4$, if all $4^+$-vertices of $f'$ are poor $4$-vertices, then  by Lemma~\ref{reduce-4569}(ii) $s_1\ge2$. So $\mu^*(f')\ge\frac{1}{6}(2\cdot2+4-6)>\frac{1}{6}$. Otherwise, $f$ gets at least $\frac{1}{6}$ from the incident $4^+$-vertices. So  $\mu^*(f')\ge\frac{1}{6}(2+4-6)+\frac{1}{6}=\frac{1}{6}$. If  $\sum_{i\ge1} i\cdot t_i=6$, then $\mu^*(f')\ge\frac{1}{6}(2+6-6)>\frac{1}{6}$.
\end{proof}

\section{Planar graphs without $5$-, $6$-, $7$-cycles and with distance of triangles at least two}\label{567T}

\begin{thm}\label{567}
Every planar graph without cycles of lengths in $\{5,6,7\}$ and with distance of triangles at least two is DP-$3$-colorable.
\end{thm}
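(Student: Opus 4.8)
The plan is to follow the same discharging template used in the previous three sections, with charges $\mu(v)=2d(v)-6$ and $\mu(f)=d(f)-6$, so that by Euler's formula the total is $-12$, and by Lemma~\ref{minimum} only $3$-, $4$-faces (and, here, $5^-$-faces in general, but since $5$-cycles are forbidden only $3$- and $4$-faces) carry negative initial charge. Since the distance between triangles is at least two, no two $3$-faces share an edge or a vertex, and a $3$-face cannot be adjacent to a $4$-face either; this will make the local structure around each short face much cleaner than in Section~\ref{356}. First I would record the immediate consequences of the hypotheses: $G$ has no $5$-, $6$-, $7$-faces, so every face is a $3$-face, a $4$-face, or an $8^+$-face; every $3$-face is surrounded by $8^+$-faces; and a $4$-face, having no $3$-face neighbor and no $4$-, $5$-, $6$-, $7$-face neighbor other than possibly another $4$-face across an edge (which a short case check rules out using that two adjacent $4$-faces would force a $6$-cycle), is also surrounded by $8^+$-faces. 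Then I would set up rule (R1) as stated, and supplementary rules (R2c)/(R3c) analogous to (R2a),(R2b): $4^+$-vertices and certain $4^-$-faces push their surplus into nearby $8^+$-faces, with a bank (R3c) to rescue the finitely many exceptional $8^+$-faces.

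The vertex analysis should go exactly as before: $3$-vertices keep charge $0$; a $4$-vertex is incident to at most two $3$-faces, gives $1$ to each by (R1) and a bounded amount ($\le\frac12$, or the appropriate fraction) to incident $8^+$-faces when rich/semi-rich, and one checks $\mu^*(v)\ge 2\cdot4-6-\max\{\dots\}=0$; a $d(v)\ge5$ vertex is incident to at most $\lfloor d(v)/2\rfloor$ short faces because triangles are far apart, gives $1$ to each $3$-face and at most $\frac12$ (or $\frac23$) elsewhere, and $\mu^*(v)\ge 2d(v)-6-\lfloor d(v)/2\rfloor-\frac12\lceil d(v)/2\rceil>0$. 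Next, $3$-faces and $4$-faces: a $3$-face gets $1$ from each incident $4^+$-vertex and the remainder evenly through its edges from its three $8^+$-face neighbors, so it ends at $0$; a $4$-face with $\le 2$ incident $4^+$-vertices is fine by (R1), and with $3$ or $4$ such vertices it ends at $0$ after collecting a small fraction back from its neighboring $8^+$-faces via the analogue of the $(3^+,4^+,4^+,4^+)$-face rule. The substance of the proof is the $8^+$-faces. Here I would use identity \eqref{s0ti} together with the bookkeeping of $s_0,s_1,t_i$ exactly as in the earlier sections: $f$ sends at most a fixed fraction (I expect $\frac12$, matching the $(3,3)$-path worst case) to the short faces controlled by each maximal $4^-$-controlling walk, so $\mu^*(f)\ge d(f)-6-\frac12\sum_i t_i$, which is already nonnegative once $d(f)$ is large enough; for $d(f)\ge 12$ or so this closes immediately.

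The delicate range is the small $8^+$-faces — $8$-, $9$-, $10$-, $11$-faces — where the crude estimate is not enough and one must extract extra charge from $5^+$-vertices, rich/semi-rich $4$-vertices, or favorable short faces. The key tool is Lemma~\ref{theta-graph-extension}: because triangles are pairwise at distance $\ge 2$, every vertex of a $3$-face adjacent to $f$ that is not on $f$ has a neighbor outside $f$ and its controlled $3$-faces, so the hypothesis of Lemma~\ref{theta-graph-extension} is met, and one concludes that any $8^+$-face $f$ carrying a special $(3,4,\dots,4,3)$-path must contain a $5^+$-vertex, or a rich $4$-vertex, or have $s_1(f)\ge 2$, or $s_1(f)=1$ with $\sum_{i\ge2} i\,t_i\ge 2$; a face with no special path has all its short-face neighbors of the ``cheap'' type (their $4^+$-vertices lie off $f$), so $f$ pays even less. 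I would then run through the residue classes of $d(f)\bmod 2$ and the possible partitions of $d(f)$ into maximal paths, in the same style as the $d(f)\in\{10,11\}$ analysis of Theorem~\ref{lem:45p9}, defining ``special''/``poor''/``bad'' small faces as needed for the exceptional cases and discharging them from the bank, and finally verifying the bank ends nonnegative by a local charging argument at each bad face's pivotal $4^+$-vertex (Cases $d(v)\ge6$, $d(v)=5$, $d(v)=4$), possibly supported by a reduction lemma analogous to Lemma~\ref{reduce-4569} obtained by ordering the vertices of two overlapping small faces and applying Lemma~\ref{near-2-degenerate}. The main obstacle I anticipate is precisely this last bookkeeping: identifying the complete list of exceptional small faces for which the generic inequality fails, showing via Lemma~\ref{near-2-degenerate} (in the guise of a \texttt{reduce}-type lemma) that the truly bad ones cannot coexist with the configurations that would starve the bank, and balancing the bank — everything else is a routine, if lengthy, adaptation of the earlier sections.
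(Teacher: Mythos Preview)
Your plan is essentially the paper's own approach: the same initial charges, (R1) feeding $4^-$-faces, supplementary rules (R2c) sending charge from $4^+$-vertices and rich $4$-faces into $8^+$-faces, a bank (R3c) for the exceptional small faces, Lemma~\ref{theta-graph-extension} for the structural dichotomy, and ad hoc reductions via Lemma~\ref{near-2-degenerate} to balance the bank. Two small corrections will save you work: first, a $(3,3)$-edge controlling a $(3,3,3)$-face costs $1$ (not $\tfrac12$), so the right crude bound is $\mu'(f)\ge d(f)-6-t_3'-\tfrac12\lfloor (d(f)-s_0-2t_3')/2\rfloor$ with $t_3'\le\lfloor d(f)/3\rfloor$ by the triangle-distance hypothesis, and this already closes for $d(f)\ge 9$ --- so the only delicate case is $d(f)=8$, not $8$--$11$; second, the hypothesis of Lemma~\ref{theta-graph-extension} (outside vertices of controlled $4^-$-faces have external neighbors) follows from the absence of $5$-, $6$-, $7$-cycles rather than from the triangle-distance condition.
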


We say a $8$-face $f$ is {\em special} if one of the following applies:
\begin{enumerate}[$(P_1)$]
\item $f$ is adjacent to one $(3,3,3,3)$-face, two $(3,3,3)$-faces and incident to a rich $4$-vertex.
\item $f$ with a rich $4$-vertex has one special $(3,4,3)$-path, two special $(3,3)$-paths and adjacent to two $3$-faces and two $4$-faces.
\item $f$ is adjacent to one $(3,3,3)$-face, two $(3,3,3,3)$-faces and one $(3,4,4^+)$-face controlled by a $(3,4)$-edge.
\item $f$ is adjacent to one $(3,3,3,3)$-face, two $(3,3,3)$-faces, and a 4-face with at least two $4^+$-vertices controlled by a $(3,4)$-edge.
\end{enumerate}

For convenience, if $f$ is a special 8-face described in $P_i$ for $1\le i\le4$, we say that $f$ is $P_i$. Note that the forbidden cycle lengths require each special $8$-face to be bounded by a cycle. See Figure~\ref{im:special8} for examples of each type of special 8-faces.

\begin{figure}[ht]
\includegraphics[scale=0.15]{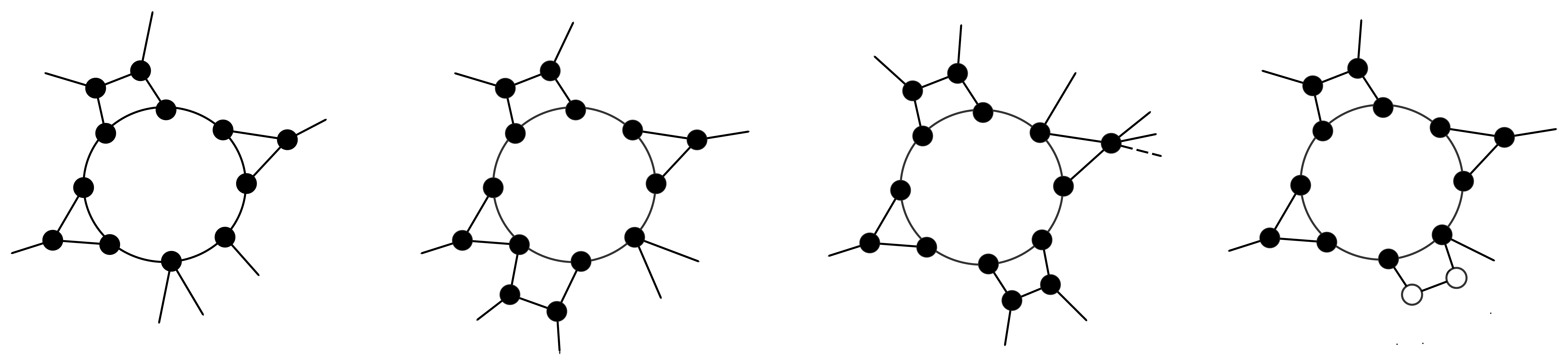}
\caption{Each face shown is special. From left to right, they are in $P_1$, $P_2$, $P_3$, and $P_4$.}\label{im:special8}
\end{figure}

Besides (R1), we also have the following rules:

\begin{itemize}
\item[(R2c)] Every $8$-face receives
\begin{itemize}
\item $1$ from each incident $6^+$-vertex,
\item $\frac{3}{4}$ from incident semi-rich $5$-vertex,
\item $\frac{1}{2}$ from other incident $5$-vertex,
\item $\frac{1}{4}$ from each non-poor $4$-vertex, and
\item $\frac{1}{2}$ from each adjacent $(3^+,4^+,4^+,4^+)$-face that is controlled by a $(4^+, 4^+)$-edge of $f$.
\end{itemize}

\item[(R3c)] After (R1) and (R2c), each vertex and face with positive charge sends its remaining charges to a bank and each special $8$-face gets $\frac{1}{4}$ from the bank.
\end{itemize}

Let $\mu'(x)$ be the charge of $x$ after (R1) and (R2c).

\begin{lem}
Each vertex and $k$-face with $k\ne 8$ in $G$ have non-negative final charge.
\end{lem}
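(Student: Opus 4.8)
The plan is to verify, case by case, that the discharging rules (R1), (R2c), (R3c) leave every vertex and every $k$-face with $k\neq 8$ with non-negative charge. I would organize the argument by the type of object, starting from the observation that the absence of $5$-, $6$-, $7$-cycles, together with the triangle-distance condition, severely restricts which faces can be adjacent to $3$-faces or $4$-faces, and in particular forces every $3$-face and $4$-face to be surrounded by $8^+$-faces.

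First I would handle the vertices. A $3$-vertex keeps $\mu^*(v)=\mu(v)=0$ and sends nothing, so it is fine. For a $4$-vertex, I would note it is incident to at most one $3$-face (triangle distance $\ge 2$ forbids two incident triangles), and bound the charge it sends out by (R1) ($1$ to an incident $3$-face) and (R2c) ($\tfrac14$ to each incident $8$-face when non-poor), checking $\mu^*(v)\ge 2\cdot 4-6-\max\{\dots\}\ge 0$ over the few configurations. For a $5$-vertex, again at most one incident $3$-face; it sends $1$ to that $3$-face, and at most $\tfrac34$ to each incident $8$-face, so $\mu^*(v)\ge 4-1-\tfrac34\cdot 4>0$ (and the all-$8$-face case without a $3$-face is even easier). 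For a $6^+$-vertex I would use that it is incident to at most $\lfloor d(v)/2\rfloor$ triangles and sends at most $1$ per face, giving $\mu^*(v)\ge 2d(v)-6-d(v)=d(v)-6\ge 0$; if a finer accounting is needed I would separate $3$-faces from $8$-faces as in the analogous estimates in Section~4.

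Next I would handle the faces other than $8$-faces. A $3$-face: by (R1) it receives $1$ from each incident $4^+$-vertex and the remainder evenly through its (three) edges from adjacent $8^+$-faces, so it ends at exactly $0$; I must just note that each of its three neighboring faces is an $8^+$-face (by the excluded lengths) so the rule is applicable. A $4$-face is adjacent to four $8^+$-faces; if it has at most two $4^+$-vertices, (R1) alone gives it $\ge 0$, and if it has three or four $4^+$-vertices, then by (R2c) and the $(3^+,4^+,4^+,4^+)$-face rule it recoups enough, yielding $\mu^*\ge 4-6+\min\{3-\tfrac12\cdot 2,\,4-\tfrac12\cdot 2\}=0$ — I need to be careful that the $\tfrac12$ is only received across $(4^+,4^+)$-edges and count how many such edges a $(3,4,4,4)$- versus $(4,4,4,4)$-face has. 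Finally, since $G$ has no $5$-, $6$-, or $7$-faces, the only remaining faces are $8$-faces (deferred) and $9^+$-faces; for a $9^+$-face $f$ I would reuse the $s_0$–$t_i$ accounting from \eqref{s0ti}: $f$ sends at most $\tfrac12$ to the $4^-$-faces controlled by each maximal $4^-$-controlling walk (at most $\tfrac12$ per $(3,3)$-edge, less otherwise), hence at most $\tfrac12\sum_i t_i\le \tfrac14 d(f)$, so $\mu^*(f)\ge d(f)-6-\tfrac14 d(f)=\tfrac34(d(f)-8)>0$.

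The main obstacle is the $4$-face analysis and, more subtly, making the bounded-charge-sent estimates airtight: I must confirm that the structural hypotheses (no $5,6,7$-cycles; triangle distance $\ge 2$) really do force every $3$-face and $4$-face to have only $8^+$-face neighbors and force $4$- and $5$-vertices to touch at most one triangle, since the whole vertex argument and the applicability of (R1) hinge on this. Once that is nailed down the remaining computations are the routine arithmetic over a short list of local configurations, and nothing here touches the bank rule (R3c) since a $9^+$-face never needs it and the special-$8$-face bank transfer is outside the scope of this lemma.
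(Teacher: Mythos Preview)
Your outline has two genuine gaps, both stemming from the fact that this section allows $3$-faces (unlike Sections~3 and~4, whose estimates you are implicitly reusing).

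\textbf{First, $7$-faces can exist.} You write ``since $G$ has no $5$-, $6$-, or $7$-faces,'' but only $5$-, $6$-, $7$-\emph{cycles} are forbidden. A $7$-face whose boundary walk is not a simple cycle is permitted: its boundary decomposes as a $3$-cycle and a $4$-cycle sharing a cut-vertex, and this violates none of the hypotheses (no $5,6,7$-cycle is created, and the triangle-distance condition is about pairs of triangles). The paper handles this case explicitly, arguing that such a $7$-face is adjacent to at most one $4^-$-face (a $4$-face containing a $4^+$-vertex and sharing two edges with it), so it gives out at most $\tfrac12$ and keeps $\mu'\ge \tfrac12$.

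\textbf{Second, and more seriously, your $9^+$-face estimate is wrong.} The bound ``at most $\tfrac12$ per $(3,3)$-edge, hence at most $\tfrac12\sum_i t_i\le \tfrac14 d(f)$'' is lifted from the settings without $3$-faces. Here a $(3,3)$-edge can control a $(3,3,3)$-face, which by (R1) needs $3$ units spread over three edges, i.e.\ $1$ through that edge, not $\tfrac12$. A $9$-face adjacent to three $(3,3,3)$-faces already gives out $3$, which exceeds $\tfrac14\cdot 9$. The paper's argument is different: it separates out the number $t_3'$ of adjacent $3$-faces, notes that each $3$-face together with its two possibly neighbouring $4$-faces costs at most $1$ while $4$-faces disjoint from $3$-faces cost at most $\tfrac12$, and then uses the triangle-distance condition to get $t_3'\le\lfloor d(f)/3\rfloor$. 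This yields \eqref{eq3}, which is nonnegative for $d(f)\ge 10$ and is checked by hand for $d(f)=9$.

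A smaller point: both $4$-vertices and $5$-vertices can lie on \emph{two} $4^-$-faces (e.g.\ two non-adjacent $4$-faces, or a $3$-face and a $4$-face not sharing an edge), not just one $3$-face; the paper's vertex bounds take $\max$ over the two-$4^-$-face case as well. Also, $4-1-\tfrac34\cdot 4=0$, not $>0$.
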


\begin{proof}

By (R3c) we only need to show that each vertex and $k$-face with $k\ne 8$ in $G$ have non-negative charge after (R1) and (R2c).

We first check the final charge of vertices by (R1) and (R2c). If $d(v)=3$, then $\mu'(v)=\mu(v)=0$. If $d(v)=4$, then $v$ gives $1$ to each incident $4^-$-face and $\frac{1}{4}$ to each incident $8$-face when $v$ is incident with at most one $4^-$-face, so $\mu'(v)\ge2-\max\{1\cdot2, 1+\frac{1}{4}\cdot3, \frac{1}{4}\cdot4\}=0$. If $d(v)=5$, then $v$ gives $1$ to each incident $4^-$-face, $\frac{3}{4}$ to each incident $8$-face $f$ when $v$ is semi-rich to $f$, and at most $\frac{1}{2}$ to each other incident face.  Hence, $\mu'(v)\ge4-\max\{1\cdot2+\frac{3}{4}\cdot2+\frac{1}{2}, 1+\frac{3}{4}\cdot4\}=0.$ If $d(v)\ge6$, then $\mu'(v)=2d(v)-6-d(v)\ge0$.

Next we check the final charge of faces. Note that each $4^-$-face can only be adjacent to $7^+$-faces. If $f$ is a $3$-face or a $4$-face with at most two $4^+$-vertices, then by (R1) $\mu'(f)\ge0$. If $f$ is a $4$-face with at least three $4^+$-vertices, then by (R2c) $\mu'(f)\ge4-6+\min\{1\cdot3-\frac{1}{2}\cdot2, 1\cdot4-\frac{1}{2}\cdot4\}=0.$

Note that $G$ contains no $\{5,6, 7\}$-cycles. So a $7$-face in $G$ cannot be a $7$-cycle, thus must contain cut vertices.  Therefore, a $7$-face $f$ is adjacent to at most one $4^-$-face, and when it is, the $4^-$-face must be a $4$-face that contains a $4^+$-vertex and shares two edges with $f$. By (R1), $f$ gives at most $\frac{1}{4}\cdot2$ to the $4$-face. So $\mu'(f)\ge7-6-\frac{1}{2}>0$.  

Let $f$ be a $8^+$-face.  By (R1),  $f$ gives $1$ to each adjacent $(3,3,3)$-face, $\frac{2}{3}$ to each adjacent $(3,3,4^+)$-face, $\frac{1}{3}$ to each adjacent $(3,4^+,4^+)$-face, $\frac{1}{2}$ to each adjacent $(3,3,3,3)$-face, and $\frac{1}{4}$ to each adjacent $(3,3,3,4^+)$-face.

Thus, $f$ gives at most $\max\{1,\frac{2}{3}+\frac{1}{4}, \frac{1}{3}+2\cdot\frac{1}{4}\}\le 1$ to a $(3^+,3^+,3^+)$-face and the two (possible) $4$-faces next to it, and at most $\frac{1}{2}$ to the $4$-faces disjoint from $3$-faces on a maximal $4^-$-controlling walks on $f$.  Let $t_3'$ be the number of $3$-faces adjacent to $f$.  Then
\begin{equation}\label{eq3}
\mu'(f)\ge d(f)-6-1\cdot t_3'-\frac{1}{2}\lfloor\frac{d(f)-s_0-2t_3'}{2}\rfloor= \left(d(f)-\frac{1}{2}\lfloor\frac{d(f)-s_0}{2}\rfloor\right)-6-\frac{1}{2}t_3'.
\end{equation}

Note that $t_3'\le \lfloor\frac{d(f)}{3}\rfloor$. So $\mu'(f)\ge 0$ if $d(f)\ge 10$. For $d(f)=9$, if $t_3'\le 2$, then $\mu'(f)\ge 0$. If $t_3'=3$, then $f$ has no $4$-faces disjoint from the $3$-faces, so $\mu'(f)\ge 9-6-3\cdot 1=0$.
\end{proof}

\begin{lem}\label{8-structure}
Each $8$-face $f$ has $\mu'(f)\ge-\frac{1}{4}$. Furthermore, if $\mu'(f)<0$, then $f$ must be a special $8$-face.
\end{lem}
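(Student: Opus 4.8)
\textbf{Proof strategy for Lemma~\ref{8-structure}.}

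The plan is to revisit inequality~\eqref{eq3} and push the estimate on how much charge an $8$-face loses. Let $f$ be an $8$-face, which by the forbidden cycle lengths must be bounded by a cycle. We start from the fact recorded just before~\eqref{eq3}: over each maximal $4^-$-controlling walk, $f$ sends at most $1$ to a $(3^+,3^+,3^+)$-face together with its two neighboring $4$-faces, and at most $\frac12$ to a $4$-face disjoint from $3$-faces. Combining this with~\eqref{s0ti} and $d(f)=8$, the worst case is when $f$ is surrounded entirely by such configurations; a short count shows $\mu'(f)\ge 8-6-\frac94=-\frac14$ is the extremal possibility, which gives the first assertion. The point is to isolate exactly which face structures around $f$ realize a deficit, i.e. when $\mu'(f)<0$.

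For the ``furthermore'' part, I would argue that $\mu'(f)<0$ forces $f$ to have essentially the maximum number of adjacent $3$-faces (so $t_3'$ is large), forces those $3$-faces to be of the most expensive type $(3,3,3)$ or $(3,3,4^+)$, and forces the leftover edges of $f$ to control $4$-faces as well. Because triangles are at distance at least $2$, two adjacent $3$-faces cannot occur, so the $3$-faces controlled by $f$ are separated along the cycle, and one checks the only way to fit enough expensive faces around an $8$-cycle to drop below zero is one of the patterns $(P_1)$--$(P_4)$: either one $(3,3,3,3)$-face plus two $(3,3,3)$-faces plus a rich $4$-vertex (this is where (R2c) fails to recover the charge because the $4$-vertex is rich and only gives $\frac14$, not enough — wait, it does give $\frac14$, so we track carefully which $4^+$-vertices are poor versus rich), or the analogous configurations with a $(3,4,4^+)$- or $(3,3,3,4^+)$-face controlled by a $(3,4)$-edge. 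In each surviving case one verifies by direct bookkeeping that $\mu'(f)=-\frac14$ and that the configuration is exactly one of $P_1,\dots,P_4$; in every other case the extra charge received via (R2c) from a $5^+$-vertex, a semi-rich $5$-vertex, a non-poor $4$-vertex, or a $(3^+,4^+,4^+,4^+)$-face controlled by a $(4^+,4^+)$-edge lifts $\mu'(f)$ to at least $0$.

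The main obstacle will be the case analysis organizing which arrangements of controlled faces around the $8$-cycle can produce a deficit: one must enumerate, up to the cyclic symmetry of the $8$-cycle and subject to the distance-$2$ constraint on triangles, the possible multisets of controlled $4^-$-faces and the degree types on $f$, and for each show either $\mu'(f)\ge 0$ or that the configuration matches a $P_i$. I would streamline this by first fixing $t_3'\in\{0,1,2\}$ (noting $t_3'\le\lfloor 8/3\rfloor=2$), handling $t_3'\le 1$ quickly since then~\eqref{eq3} already gives $\mu'(f)\ge 8-\frac12\lfloor 8/2\rfloor-6-\frac12\ge -\frac12+\text{(room)}$ and the (R2c) contributions close the gap unless forced into $P_i$, and then spending the bulk of the work on $t_3'=2$, where the two $3$-faces split the remaining six edges of $f$ and one carefully accounts for how many of those six edges control $4$-faces and what the degrees of the intervening vertices of $f$ are.
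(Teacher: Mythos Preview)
Your proposal has a genuine gap: it treats the lemma as a pure charge-counting statement, but the bound $\mu'(f)\ge -\frac14$ is \emph{false} for abstract $8$-faces and only holds because $G$ is a minimal counterexample. The missing ingredient is Lemma~\ref{theta-graph-extension}.

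Concretely, take an $8$-face $f$ bounded by a cycle of eight $3$-vertices, with (say) $u_1u_2$ and $u_5u_6$ each controlling a $(3,3,3)$-face and $u_3u_4$, $u_7u_8$ each controlling a $(3,3,3,3)$-face. The triangle-distance condition is satisfied, yet by (R1) alone $f$ loses $2\cdot 1+2\cdot\frac12=3$ and receives nothing from (R2c), so $\mu'(f)=-1<-\frac14$. This configuration is not on the list $P_1,\dots,P_4$; what actually excludes it is that $f$ has special $(3,3)$-paths, so Lemma~\ref{theta-graph-extension} forces a $5^+$-vertex, a rich $4$-vertex, or $s_1(f)\ge 2$, contradicting the all-$3$-vertex hypothesis. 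Your outline never invokes this reducibility lemma, so your ``short count'' for the first assertion cannot close, and your enumeration for the ``furthermore'' part will leave many non-special configurations unaccounted for.

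The paper's proof is organized precisely to funnel the analysis toward an application of Lemma~\ref{theta-graph-extension}: it first disposes of $5^+$-vertices and rich $4$-vertices (where (R2c) already suffices, yielding $P_1$, $P_2$ in the tight cases), then restricts to maximal paths with at most one internal vertex, and finally uses Lemma~\ref{theta-graph-extension} to force $s_1\ge 2$ once a special path is present, which is what produces $P_3$ and $P_4$ as the only surviving deficits. A secondary minor issue: an $8$-face need not be bounded by a cycle (it can be two $4$-cycles meeting at a cut vertex), so that case must be handled separately, as the paper does.
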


\begin{proof}
First observe that if $f$ contains a cut vertex, then it must consist of two $4$-cycles. In this case, $f$ is not adjacent to $3$-faces and is adjacent to at most one $4$-face that contains a $4^+$-vertex and shares two edges with $f$. By (R1), $f$ gives at most $\frac{1}{4}\cdot2$ to the $4$-face. So $\mu'(f)\ge8-6-\frac{1}{2}>0$. Therefore, we may assume all $8$-faces are bounded by $8$-cycles.

We may assume that $f$ contains at least three maximal paths. For otherwise, by (R1), $\mu'(f)\ge 8-6-\max\{\frac{4}{3}+\frac{1}{2}, 1\cdot2\}=0$. Thus $s_0\le2$ and by parity each maximal path of $f$ has at most two internal vertices.

We may assume that $f$ contains no $5^+$-vertex. For otherwise, by (R2c),
\begin{itemize}
\item If $f$ contains a $6^+$-vertex, then $\mu'(f)\ge 8-6-1\cdot 2-\frac{1}{2}\cdot 2+1=0$.
\item If $f$ contains a semi-rich $5$-vertex, then $\mu'(f)\ge 8-6-(1\cdot 2+\frac{1}{2}\cdot 2-\frac{1}{4})+\frac{3}{4}=0$.
\item If $f$ contains a rich or poor $5$-vertex, then by parity $f$ must have three maximal paths and $\mu'(f)\ge 8-6-(1\cdot 2+\frac{1}{2})+\frac{1}{2}=0$.
\end{itemize}

We may assume that $f$ contains no rich $4$-vertex. Suppose otherwise. Then $f$ has exactly three maximal paths and gains $\frac{1}{4}$ from the vertex, so $\mu'(f)\ge 8-6-(1\cdot 2+\frac{1}{2})+\frac{1}{4}=-\frac{1}{4}$. Recall that $s_0\le2$. Since the distance of triangles in $G$ is at least two,  when $\mu'(f)<0$, we have (a) $s_0=2$, and $f$ is adjacent to two $(3,3,3)$-faces and one $(3,3,3,3)$-face and contains one rich $4$-vertex and one rich $3$-vertex, which gives us the special face $P_1$, or (b) $s_0=1$, and $f$ is adjacent to two special $(3,3)$-paths and one special $(3,4,3)$-path that controls two $3$-faces and two $4$-faces, which is the special face $P_2$.

We may also assume that each maximal path of $f$ has at most one internal vertices. Suppose  otherwise that $f$ contains a maximal path with two internal $4$-vertices. It implies that $f$ contains two $(3^+,3^+)$-paths and one $(3^+,4,4,3^+)$-path, which control at most two $3$-faces because of the distance condition on triangles. Note that $f$ cannot be adjacent to two $(3,3,3)$-faces. So $f$ gives out at most $1+\frac{1}{2}$ to adjacent $3$-faces and $4$-faces controlled by the two $(3^+,3^+)$-paths and  at most $\frac{2}{3}+\frac{1}{4}<1$ to adjacent $3$-faces and $4$-faces controlled by the $(3^+,4,4,3^+)$-path. Thus $f$ gives out at most $2\frac{1}{2}$. If the $(4,4)$-edge controls a $(4,4,4+)$-face, then $\mu'(f)\ge8-6-1-\frac{1}{2}-\frac{1}{4}\cdot2=0$. If the $(4,4)$-edge controls a $(4,4,4^+,3^+)$-face, then $f$ saves $\frac{1}{2}$ through the $(4,4)$-edge. So we may assume that the $(4,4)$-edge controls a $(4,4,3)$-face or a $(4,4,3,3)$-face.  By Lemma~\ref{theta-graph-extension}, either $f$ contains no special paths, or $s_1(f)\ge 2$. In the former case, $f$ gives out at most $\frac{2}{3}\cdot 2+\frac{1}{4}\cdot 2<2$; in the latter case, $f$ saves at least $\frac{1}{4}\cdot 2=\frac{1}{2}$ in giving.  In either case, $\mu'(f)\ge 0$. 


Since $G$ contains no $5$-, $6$-, or $7$-cycles,  every outside vertex on a controlled $4^-$-face of $f$ must have a neighbor not on $f$ or on controlled $4^-$-faces. We confirm that $f$ must contain a special $(3,3)$-path or a special $(3,4,3)$-path. For otherwise, by (R1), $\mu'(f)\ge 8-6-\max\{\frac{2}{3}\cdot2+\frac{1}{4}\cdot 2\}>0$. Thus Lemma~\ref{theta-graph-extension} implies that $s_1\ge 2$. Note that for each element counted in $s_1$, we have either a semi-rich $4$-vertex, from which $f$ gains $\frac{1}{4}$, or a $(3,4,4^+)$- or $(3,4,4^+,3^+)$-face controlled by $(3,4)$-path, to which $f$ respectively gives at most $\frac{1}{3}$ or $0$. So if $f$ contains three maximal paths, then $\mu'(f)\ge 8-6-(1\cdot 2+\frac{1}{2})+\frac{1}{4}\cdot s_1\ge0$. If $f$ contains four $(3^+,3^+)$-paths, then by Lemma~\ref{theta-graph-extension}(iii) $f$ contains at least one semi-rich $4$-vertex. Now if some $4^-$-face contains two $4^+$-vertices, then $\mu'(f)\ge 8-6-\max\{1+1+\frac{1}{2},1+2\cdot\frac{1}{2}+\frac{1}{3}\}+\frac{1}{4}= -\frac{1}{4}$, and when $\mu'(f)<0$, we have $P_3$ or $P_4$. If every $4^-$-face contains at most one $4^+$-vertex, then $f$ has at least two semi-rich $4$-vertices by Lemma~\ref{theta-graph-extension}(iii), so $\mu'(f)\ge 8-6-(1\cdot 2+\frac{1}{2}\cdot2-\frac{1}{4}\cdot 2)+\frac{1}{4}\cdot2=0$.
\end{proof}

By Lemmas~\ref{8-structure} and (R3c), we guarantee that $8$-faces have non-negative final charge. Thus to finish the contradiction, we must show that the bank has non-negative final charge. This is shown in Lemmas~\ref{reducible2} to~\ref{bank}.

\begin{lem}\label{reducible2}
Let $v$ be a $4$-vertex and $f_i$ for $1\le i\le4$ be the four incident faces of $v$ in the clockwise order. Let $f_1$ be a $4$-face and $f_i$ be a $8^+$-face for $2\le i\le4$. Then each of the following holds:
\begin{enumerate}[(a)]
\item If $f_3$ is $P_1$ or $P_2$, then neither $f_2$ nor $f_4$ is $P_3$ or $P_4$.

\item Let $f_2$ and $f_4$ be $P_4$ and suppose $f_3$ is an 8-face.  If $f_3$ contains a $(3,4,3)$-path, then each of the controlled faces has at least two $4^+$-vertices.

\item Let $f_2$ and $f_4$ be $P_4$ and suppose $f_3$ is an $8$-face. If $f_3$ contains a $(3,3)$-path that controls a $4$-face and another $4$-vertex $u$ other than $v$. Then $u$ cannot be on a $4^-$-face and a $P_4$.
\end{enumerate}
Consequently, $v$ is incident with at most one special $8$-face, unless $v$ is incident with two $P_4$ whose $4$-vertices are on $(3,4,3,4^+)$-faces.
\end{lem}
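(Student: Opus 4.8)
The plan is to obtain (a), (b), (c) as reducibility facts: assuming the named configuration occurs in the minimum counterexample $G$, we build a subgraph $H$ and a linear order $v_1,\dots,v_\ell$ of $V(H)$ meeting the three hypotheses of Lemma~\ref{near-2-degenerate} with $k=3$, so that a DP-$3$-colouring of $G-H$ extends to $G$ --- a contradiction. In every case $H$ consists of $v$, the vertices of whichever special $8$-faces among $f_2,f_3,f_4$ are involved, and the vertices on the $4^-$-faces those $8$-faces control. Three facts organise the construction: (i) since $G$ has no $\{5,6,7\}$-cycle and triangles are at distance $\ge 2$, each special $8$-face is an $8$-cycle, each controlled $4^-$-face is a triangle or a $4$-cycle, and distinct controlled triangles are vertex-disjoint; (ii) by the definitions of $P_1$--$P_4$ and the hypothesis behind Lemma~\ref{theta-graph-extension}, every vertex lying on a controlled $4^-$-face but off the $8$-face has degree $3$ and a neighbour outside $H$; and (iii) because $f_1$ is a $4$-face incident with $v$, the two edges of $v$ bounding $f_1$ are shared with $f_2$ and $f_4$, so $v$ glues the boundary cycles of two of $f_2,f_3,f_4$ across $f_1$, while the remaining $8$-face lies ``opposite'' $f_1$ at $v$.

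For (a), suppose $f_3\in\{P_1,P_2\}$ and $f_2\in\{P_3,P_4\}$ (the case of $f_4$ is symmetric). Let $x$ be the rich $4$-vertex on $f_3$ and take it as $v_1$; since $x$ has no neighbour outside $H$, $|A(v_1)|=3$. Starting at $x$ we walk around $\partial f_3$, then cross through $v$ into $\partial f_2$ and walk around it, after each $8$-face vertex inserting the remaining vertex of any controlled triangle exactly as in the proof of Lemma~\ref{theta-graph-extension} and placing each $4$-vertex as the lowest-indexed vertex of its special path; we finish at a degree-$3$ vertex $v_\ell$ on the $(3,4)$-controlled face of $f_2$'s $P_3/P_4$ that by (ii) has a neighbour outside $H$, so $1\le|A(v_\ell)|\le 2<|A(v_1)|$. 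Since $\partial f_3\cup\partial f_2$ glued along the edges of $v$ on $f_1$ is a closed walk, we may arrange $v_1v_\ell\in E(G)$, and every interior vertex of the order is either a degree-$3$ vertex or a $4$-vertex first on its special path, hence has at most two neighbours among the earlier vertices and $G-H$. Lemma~\ref{near-2-degenerate} yields the contradiction.

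Parts (b) and (c) use the same construction with the two $P_4$'s on the two sides of $v$ and the intermediate $8$-face $f_3$: the $(3,4,3)$-path of $f_3$ in (b), and the $(3,3)$-path of $f_3$ controlling a $4$-face with a second $4$-vertex $u$ in (c), supply the needed slack, and if the stated conclusion fails one can route the order through $f_2$, across $v$ into $f_3$, back across $v$ into $f_4$, absorbing $v$ and $u$ at their lowest-indexed positions on their special paths and ending at a degree-$3$ vertex with an external neighbour, so Lemma~\ref{near-2-degenerate} applies again. The final ``consequently'' then follows by a short case analysis combining (a) with fact (iii): a special $8$-face that shares with $f_1$ one of the edges of $v$ must be a $P_4$ whose $(3,4)$-controlled $4$-face is $f_1$ (the options $P_1,P_2,P_3$ being excluded because $f_1$ is a $4$-face carrying the degree-$4$ vertex $v$ and is not adjacent to a triangle), while a special $8$-face opposite $f_1$ at $v$ is forced to be $P_1$ with $v$ its rich vertex; since $f_3\in\{P_1,P_2\}$ paired with $f_2$ or $f_4$ in $\{P_3,P_4\}$ is forbidden by (a), the only way $v$ lies on two special $8$-faces is $f_2=f_4=P_4$, and then their common controlled $4$-face $f_1$ carries the two $4^+$-vertices $v$ and its antipode, i.e. $f_1$ is a $(3,4,3,4^+)$-face.

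The step I expect to be the real work is verifying hypothesis (3) of Lemma~\ref{near-2-degenerate} at the ``seam'' vertices $v$, $x$ (or $u$) where the order moves from one $8$-face to another: one must fix the orientation in which each special path is traversed and the exact insertion points of the controlled-triangle vertices so that these degree-$4$ vertices never see three already-listed or external neighbours, while still guaranteeing $|A(v_1)|=3$, $|A(v_\ell)|\ge 1$, and $v_1v_\ell\in E(G)$. This is precisely where the triangle-distance hypothesis, the external-neighbour condition on controlled-face vertices, and the exact $P_i$ classification (telling us which edges control $4^-$-faces and which vertices have degree $3$) are used; once the order is written down in each of the finitely many sub-cases, the conclusion is immediate from Lemma~\ref{near-2-degenerate}.
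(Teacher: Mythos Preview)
Your overall strategy---build an ordering satisfying Lemma~\ref{near-2-degenerate} through the involved $8$-faces and their controlled faces---is exactly the paper's approach, but your execution has a structural error that would prevent the argument from going through as described.

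For part (a), you write ``Let $x$ be the rich $4$-vertex on $f_3$\dots starting at $x$ we walk around $\partial f_3$, then cross through $v$ into $\partial f_2$,'' implicitly treating $x$ and $v$ as distinct. They are not: $v$ is a $4$-vertex on $f_3$, and since its two edges on $f_3$ lead to the $8^+$-faces $f_2,f_4$, it is rich to $f_3$; but $P_1$ has exactly one $4$-vertex (the rich one) and $P_2$ has exactly one \emph{rich} $4$-vertex, so $x=v$. Your described walk then starts and ``crosses through'' the same vertex, which is incoherent. The paper avoids this entirely: it first notes that $f_2$ must be $P_4$ (not $P_3$, since the $(3,4)$-edge at $v$ controls the $4$-face $f_1$, not a triangle), then starts the ordering at a $3$-vertex $v_5$ of $f_2$ lying on a $(3,3,3)$-triangle $v_4v_5u$ (so $|A(v_5)|=3$) and ends at $u$. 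The route is $v_5,v_6,v,w_6,\dots,w_1,w,v_1,\dots,v_4,u$, passing from $f_2$ into $f_3$ at $v$ and back into $f_2$ at $w$ (the other endpoint of the shared edge $vw$). Notice the paper's $S$ is small: it does \emph{not} absorb all controlled faces of $f_2$ (in particular not $f_1$), so your proposed larger $H$ and the verification that $|A(v)|=3$ via including $f_1$ are unnecessary detours.

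For (b) and (c) your phrase ``across $v$ into $f_3$, back across $v$ into $f_4$'' is at best misleading: $v$ can appear only once in the list. The point is that $f_2,f_3$ share the edge $vv_7$ and $f_3,f_4$ share $vv_{13}$, so the paper's ordering begins at $v$, runs $v,v_1,\dots,v_7$ along $f_2$, continues $v_8,\dots,v_{13}$ along $f_3$ (inserting the two $3$-vertices of the offending $(4,3,3,3)$-face between $v_{10}$ and $v_{11}$), and finishes $v_{14},\dots,v_{19}$ along $f_4$; the edge $vv_{19}$ closes the loop for condition~(1). Part (c) is handled by an analogous explicit list through $f_2,f_3$ and the extra $P_4$ at $u$, ending at a triangle vertex $w_1$. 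Your sketch does not indicate how the seams at $v_7,v_{13}$ (rather than at $v$) make condition~(3) hold, and this is precisely the content you flagged as ``the real work.'' Finally, in your ``consequently'' paragraph, $f_3$ is not forced to be $P_1$; it can equally be $P_2$, as the paper's case analysis records.
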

\begin{center}
\begin{figure}[ht]
\includegraphics[scale=0.28]{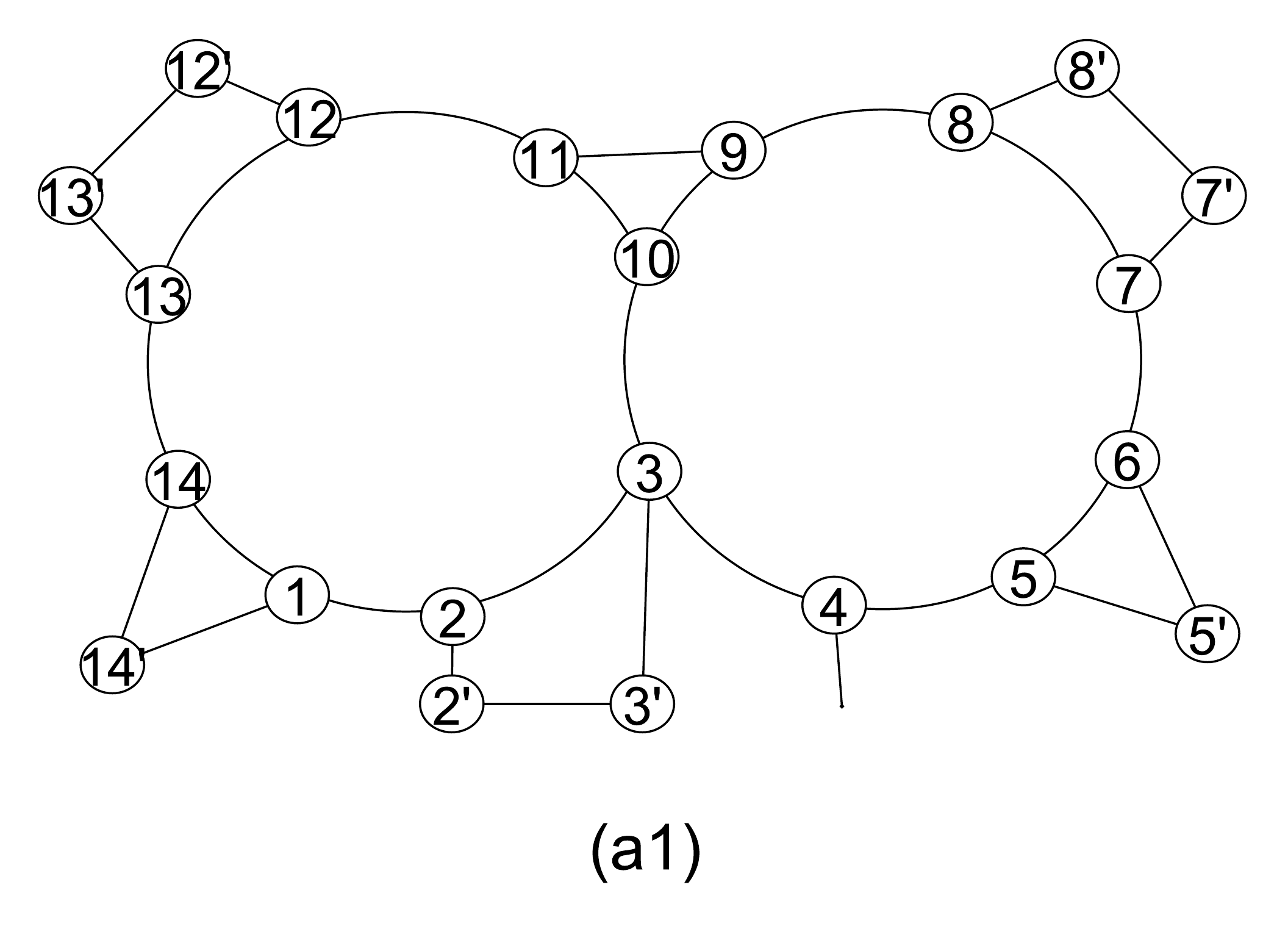}
\includegraphics[scale=0.28]{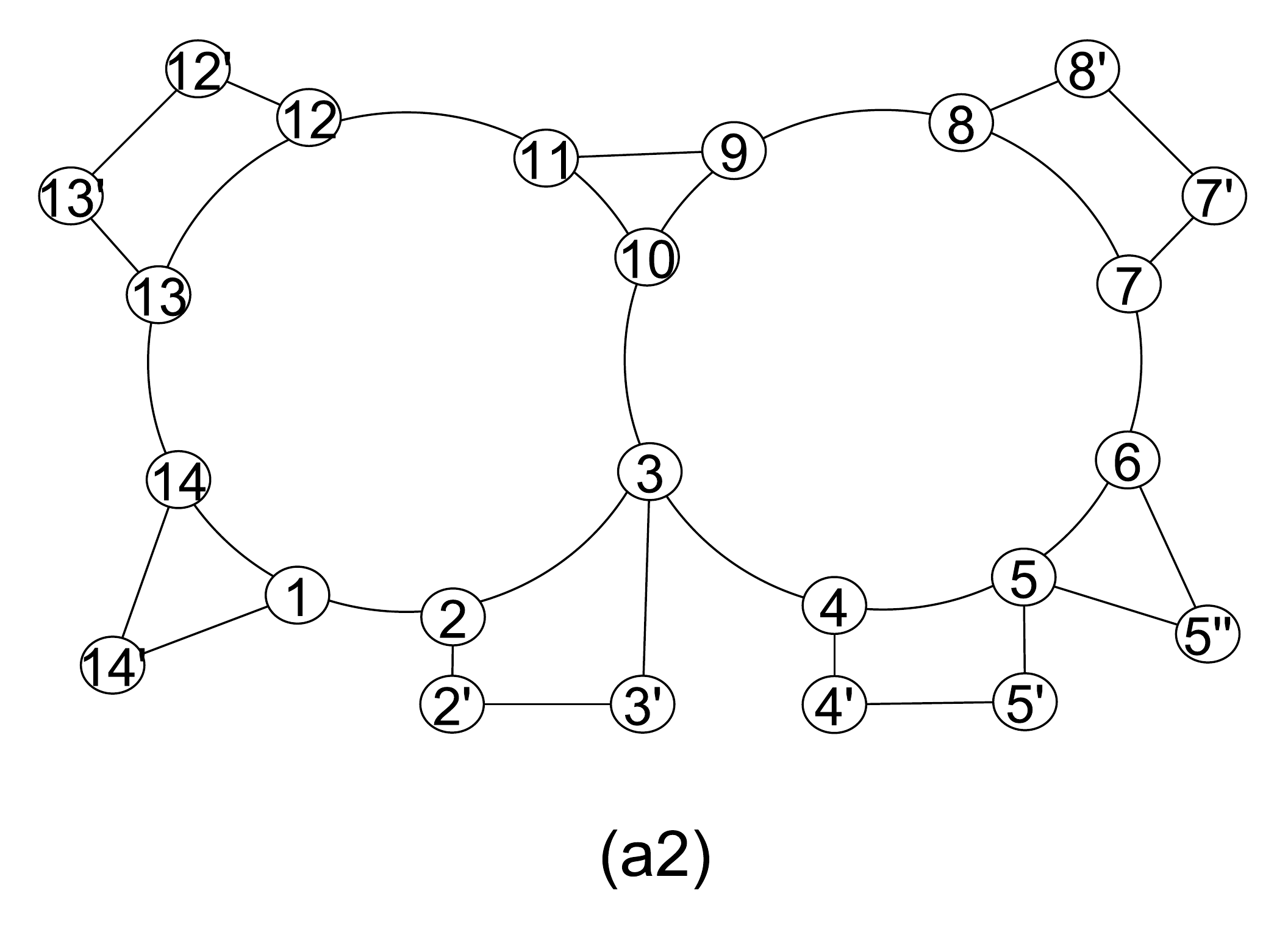}
\includegraphics[scale=0.28]{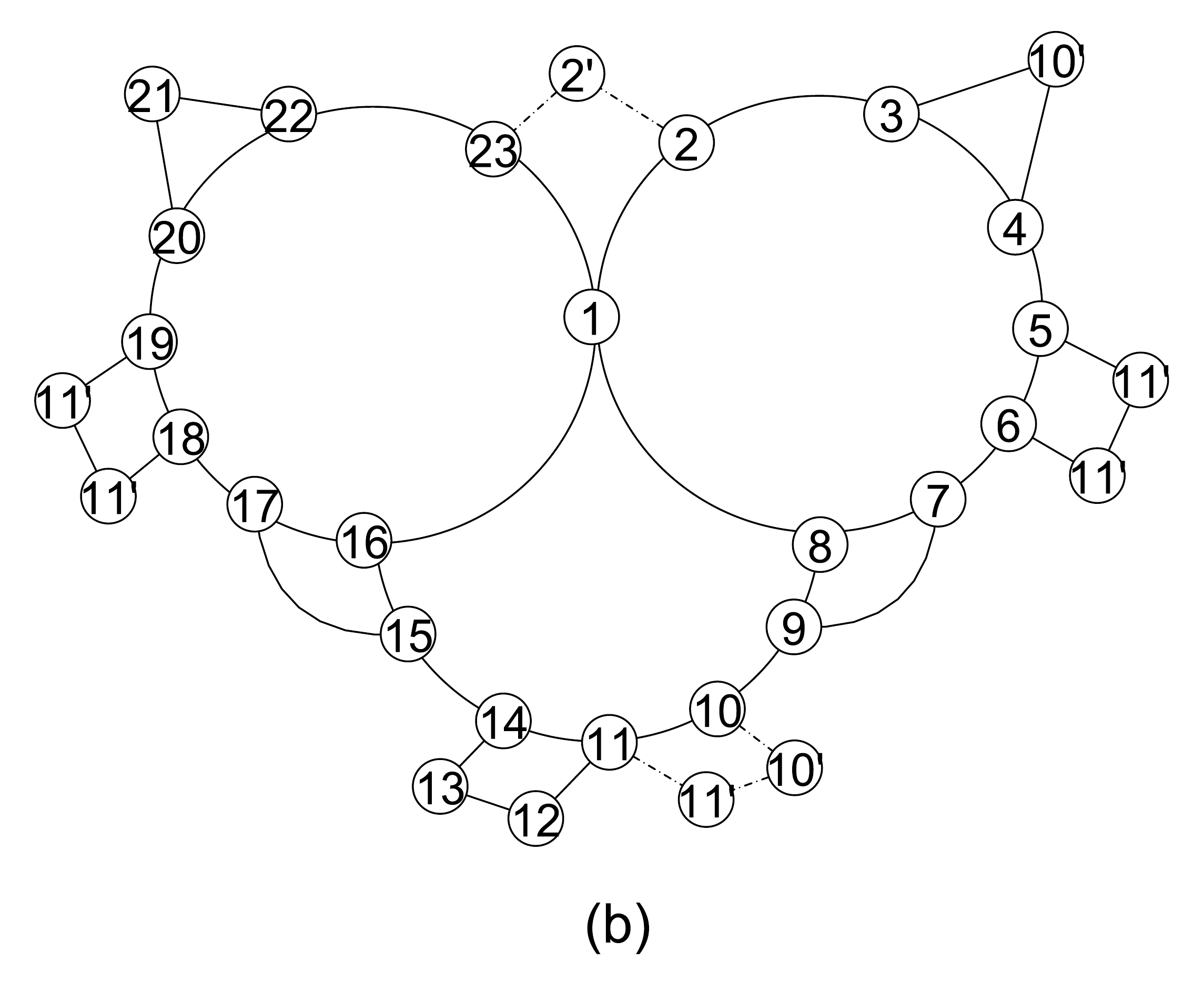}
\includegraphics[scale=0.28]{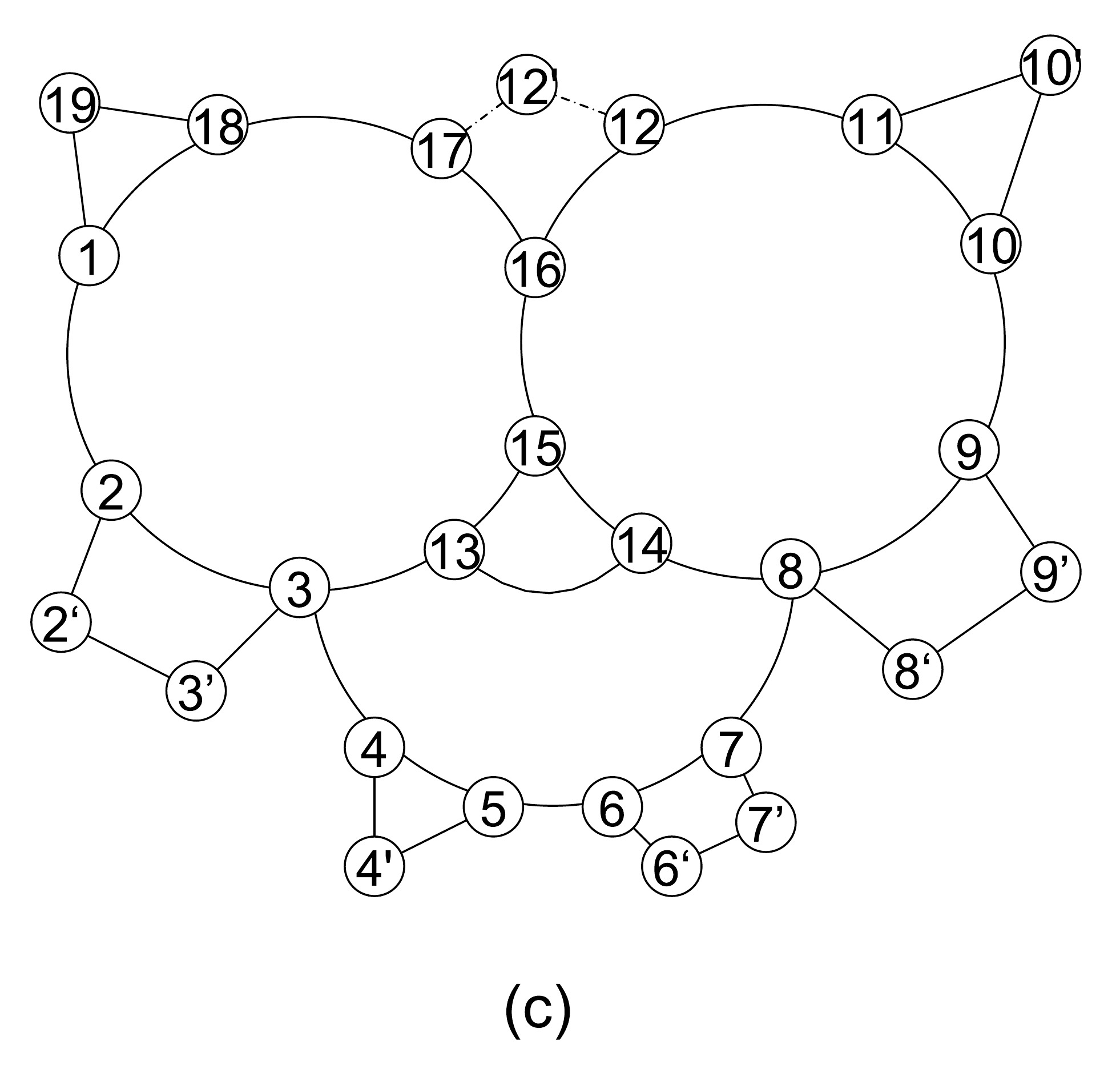}
\caption{The list of vertices in the proof of Lemma~\ref{reducible2} (a) (the first two), (b) and (c).}\label{a}
\end{figure}
\end{center}
\begin{proof}
(a). Let $f_3$ be $P_1$ or $P_2$ and $f_2$ be $P_3$ or $P_4$.  Since $v$ is on a $3$-face in $P_3$ and next to a $3$-face on $P_1$ and $P_2$, $f_2$ must be a $P_4$. Let $vw$ be the common edge of $f_2$ and $f_3$. Let $f_2=vwv_1\ldots v_6$ and $f_3=vww_1w_2\ldots w_6$ in the cyclic orders.  Then $v_1w_1\in E(G)$, and $v_4v_5$ is on a $(3,3,3)$-face, called $v_4v_5u$. Order $u$ and the vertices on $f_2$ and $f_3$ as follows (see Figure~\ref{a}): $$v_5,\ v_6,\ v,\ w_6,\ w_5,\ w_4,\ w_3,\ w_2,\ w_1,\ w,\ v_1,\ v_2,\ v_3,\ v_4,\ u$$
When $f_3$ is $P_2$, we insert the vertices not on $f_3$ but on the $4^-$-faces controlled by the $(3,4,3)$-path to the list in cyclic order.  Let $S$ be the set of vertices in the list. Then it is not hard to check that $S$ has no repeated vertices and $u$ has a neighbor outside of $S$.  By Lemma~\ref{near-2-degenerate}, a DP-3-coloring of $G-S$ can be extended to $G$, a contradiction.

(b). Let $f_2=vv_1\ldots v_7$, $f_3=vv_7v_8\ldots v_{13}$, and $f_4=vv_{13}v_{14}\ldots v_{19}$, so that $f_2,f_4$ are $P_4$s, and $v_9v_{10}v_{11}$ is a $(3,4,3)$-path on $f_3$ that controls two $4$-faces and $v_{10}v_{11}$ is on a $(4,3,3,3)$-face $v_{10}v'v''v_{11}$. Order the vertices in $f_2$, $f_3$,$f_4$ and $v',v''$ as follows (see Figure~\ref{a}): $$v,\ v_2,\ \ldots, v_9,\ v_{10},\ v',\ v'',\ v_{11}, \ \ldots, v_{19}$$
Let $S$ be the set of vertices in the list.  Then it is not hard check that $S$ has no repeated vertices and $v_{19}$ has a neighbor outside of $S$. By Lemma~\ref{near-2-degenerate}, a DP-3-coloring of $G-S$ can be extended to $G$, a contradiction.

(c). Assume that $u$ is on a $4^-$-face and a $P_4$. Since $u$ is next to a triangle, on $f_3$, $u$ is on a $4$-face. Let $f_i'$ for $i\in [4]$ be the four incident faces of $u$ in clockwise order so that $f_1'$ is adjacent to $f_2$. Then $f_2'$ is the $4$-face at $u$ and $f_4'=f_3$. Note that $f_3'$ cannot be $P_4$, since it contains two consecutive $4$-faces. So $f_1'$ is the $P_4$ at $u$.

Follow the labels of vertices on $f_2$ and $f_3$ in (b). Then $u=v_{9}$. Let $f_1'=v_5v_6v_8v_9u_1u_2u_3u_4$.  By definition of $P_4$, $v_2v_3$ is on a $(3,3,3)$-face, say $v_2v_3w_1$, and $u_2u_3$ is also on a $(3,3,3)$-face, say $u_2u_3w_2$. It is not hard to check that $w_1$ has no neighbors on $f_2,f_3,f_1'$ other than $v_2,v_3$.  Order the vertices as follows (see Figure~\ref{a}):
$$v_2, \ v_1,\ v,\ v_{13},\ v_{12},\ v_{11},\ v_{10},\ v_9,\ v_8,\ v_7,\ v_6,\ u_1,\ u_2,\ u_3,\ u_4,\ v_5,\ v_4,\ v_3,\ w_1$$
Let $S$ be the set of vertices in the list. Then it is not hard to check that $S$ has no repeated vertices. By the choice, $w_1$ has a neighbor outside of $S$. By Lemma~\ref{near-2-degenerate}, a DP-3-coloring of $G-S$ can be extended to $G$, a contradiction.
\medskip

Now assume that $v$ is incident with two special $8$-faces. Let $f$ be a special $8$-face at $v$.  Then $v$ is a rich $4$-vertex to $f$ when $f\in \{P_1, P_2\}$ and a semi-rich $4$-vertex to $f$ when $f\in \{P_3, P_4\}$. It follows that when $f\in \{P_1, P_2\}$, then $f$ must be $f_3$, and when $f\in \{P_3, P_4\}$, then $f$ must be $f_2$ or $f_4$.  By (a),  we may assume that $f\in \{P_3, P_4\}$.  Let $f_2=P_3$ by symmetry. It follows that $f_4$ contains a $(4,4^+)$-path, so it cannot be a $P_3$ or $P_4$.  Therefore, we may assume $f_2$ and $f_4$ are both $P_4$.  Then $f_4$ and $f_1$ cannot share a $(4,4^+)$-edge.  So $f_1$ must be a $(3,4,3,4^+)$-face, i.e., the two $4^+$-vertices are not consecutive on $f_1$.
\end{proof}

\begin{lem}\label{bank}
The bank has nonnegative final charge.
\end{lem}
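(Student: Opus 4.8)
The plan is to reduce the claim to a single inequality. The bank starts with charge $0$, receives $\sum_{x:\mu'(x)>0}\mu'(x)$ in (R3c) (the sum over all vertices and faces), and pays out exactly $\frac14$ to each of the $N$ special $8$-faces, so it suffices to prove $\sum_{x:\mu'(x)>0}\mu'(x)\ge\frac14 N$. To produce this surplus I would pay for each special $8$-face locally, through a \emph{distinguished} $4$-vertex $v_f$ on it: the rich $4$-vertex if $f$ is of type $P_1$ or $P_2$, and the $4$-vertex on the controlling $(3,4)$-edge (which is then semi-rich to $f$) if $f$ is of type $P_3$ or $P_4$. The aim is to route $\frac14$ of $\mu'(v_f)$ to the bank on behalf of $f$.

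This requires two ingredients. First, the estimate $\mu'(v_f)\ge\frac14$. Since $v_f$ is rich or semi-rich to $f$, the face $f$ has length $\ge 5$ and so does at least one of the two faces meeting $f$ at $v_f$; the remaining two faces at $v_f$ are therefore consecutive in the rotation at $v_f$, and since in this setting a $4^-$-face is adjacent only to $7^+$-faces, at most one of the four faces at $v_f$ is a $4^-$-face. Hence $v_f$ sends $1$ to at most one incident face and at most $\frac14$ to each of its (at most three) incident $8$-faces, so $\mu'(v_f)\ge 2-(1+3\cdot\frac14)=\frac14$, with equality only when $v_f$ is incident to exactly one $4^-$-face and three $8$-faces. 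Second, Lemma~\ref{reducible2}: a $4$-vertex is distinguished for at most one special $8$-face unless it lies on two faces of type $P_4$ whose controlled $4$-faces are $(3,4,3,4^+)$-faces. Thus for every $4$-vertex that is distinguished for a single face, routing $\frac14$ of its surplus to the bank is consistent, and the only thing left is to supply $\frac12$ on behalf of each exceptional pair $\{f,f'\}$ of $P_4$-faces sharing a distinguished vertex $v$.

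For such a pair I would analyse the four faces at $v$. Since $f$ and $f'$ cannot share an edge at $v$ (two $4^-$-faces would then be adjacent) and $v$ is semi-rich to both, their controlled $4$-faces must coincide in a single $(3,4,3,4^+)$-face $Q$; the fourth face $R$ at $v$ is then not $4^-$, $v$ is rich to $R$, and $R$ is not a special $8$-face (otherwise $v$ would lie on three special $8$-faces, contradicting Lemma~\ref{reducible2}). If $R$ is not an $8$-face, then $v$ sends nothing to $R$ and $\mu'(v)=2-(1+2\cdot\frac14)=\frac12$, which already pays for both $f$ and $f'$. If $R$ is an $8$-face, then $\mu'(v)=\frac14$ and the missing $\frac14$ must come from the unique $4^+$-vertex $z$ of $Q$ not lying on $f$ or $f'$ (the ``$4^+$'' entry of $Q$): a degree computation gives $\mu'(z)\ge\frac14$ unless $d(z)=4$ and $z$ lies on a second $4^-$-face $Q'$, and in that remaining case I would list the vertices of $f$, $f'$, $Q$, $Q'$ and of the triangles hanging off $f$ and $f'$ in a suitable order and apply Lemma~\ref{near-2-degenerate} to extend a DP-$3$-coloring of $G$ minus this set, contradicting minimality. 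In particular, whenever $z$ is actually needed it is incident to exactly one $4^-$-face, which shows it is the auxiliary vertex of no other exceptional pair, so routing $\frac14$ of $\mu'(z)$ to the bank on behalf of the second face of $\{f,f'\}$ creates no conflict.

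The main obstacle will be exactly this last reducibility argument: it is of the same character as parts (a)--(c) of Lemma~\ref{reducible2}, but it must also bring in the $4^+$-vertex of the common $4$-face, and one must choose the vertex ordering so that each interior vertex has at most two neighbours among the earlier vertices and outside the set, which is where the forbidden cycle lengths and the triangle-distance hypothesis do their work. Granting it, the accounting closes -- each distinguished vertex and each auxiliary vertex $z$ is tapped at most once -- so $\sum_{x:\mu'(x)>0}\mu'(x)\ge\frac14 N$; conservation of charge then contradicts $\sum_x\mu(x)=-12$, completing the proof of Theorem~\ref{567} and the final case of Theorem~\ref{thmmain}.
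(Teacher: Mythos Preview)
Your opening matches the paper's: route the $\frac14$ for each special $8$-face through its rich or semi-rich $4$-vertex, observe that vertex has $\mu'\ge\frac14$, and invoke Lemma~\ref{reducible2} to see it is distinguished for at most one face except in the two-$P_4$ situation. The divergence is in how you handle that exception. You try to draw the missing $\frac14$ from the second $4^+$-vertex $z$ of the shared $4$-face $Q$. The paper instead draws it from the non-special $8$-face $R=f_3$ at $v$: it shows $\mu'(f_3)\ge\frac14$ by a case split on how many $4$-faces $f_3$ controls beyond its two forced $(3,3,3)$-faces, invoking parts (b) and (c) of Lemma~\ref{reducible2} and, in one sub-case, a further rich $4$-vertex $u$ on $f_3$. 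The phrase ``may be thought of as from $v$'' in the paper records that $v$ itself contributes $\frac14$ to $f_3$ under (R2c), so if several exceptional vertices sit on the same $f_3$ its surplus scales accordingly and nothing is double-counted.

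Your route through $z$ has two genuine gaps. First, the degree claim is false as stated: a $5$-vertex on two $4^-$-faces and three $8$-faces has $\mu'(z)=4-2-\tfrac12-\tfrac34-\tfrac34=0$, and a $6$-vertex on three $4$-faces and three $8$-faces has $\mu'(z)=6-6=0$; so ``$d(z)=4$ and $z$ on a second $4^-$-face'' is not the only case your reducibility argument would have to cover. Second, and more seriously, you have not checked that $z$'s surplus is free. Write $Q=avbz$ with $a,b$ the $3$-vertices; the face across $bz$ (or $za$) can perfectly well be a $P_4$ with $z$ as \emph{its} distinguished vertex---the $(3,4)$-edge is $bz$, the controlled $4$-face is again $Q$, which has the required two $4^+$-vertices---and then $z$'s $\frac14$ is already spent. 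Worse, if both faces across $bz$ and $za$ are $P_4$, then $z$ is itself exceptional and its auxiliary vertex is $v$, so the accounting becomes circular with a net shortfall of $\frac12$. Your ``no-conflict'' sentence only rules out $z$ being auxiliary to a second pair, not $z$ being a distinguished vertex in its own right. The paper avoids all of this by never touching $z$ and working with $f_3$ instead.
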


\begin{proof}
By Lemma~\ref{8-structure}, the bank only need to send to special $8$-faces. Let $f$ be a special $8$-face and $v$ be the rich or semi-rich $4$-vertex on $f$. If $v$ is not incident with any $4^-$-faces, then $v$  sends at least $2-\frac{1}{4}\cdot4=1$ to the bank and the bank gives out at most $\frac{1}{4}\cdot4=1$ to all the $8$-faces incident with $v$. So we may assume that $v$ is on a $4^-$-face,  then $v$ sends at least $2-1-\frac{1}{4}\cdot3=\frac{1}{4}$ to the bank. Let  $f_i$ for $1\le i\le4$ be the four incident faces of $v$ in clockwise order and let $f_1$ be the $4^-$-face.

If $v$ is incident with at most one special $8$-face, then the bank can give out the $\frac{1}{4}$ from $v$ to the special $8$-face at $v$.  So we may assume that $v$ is on two special $8$-faces. Then by Lemma~\ref{reducible2},  the special $8$-faces are $f_2$ and $f_4$, and $f_1$ is a $(3,4,3,4^+)$-face. If $f_3$ is a $9^+$-face, then by the rules, $v$ sends at least $2-1-\frac{1}{4}\cdot2=\frac{1}{2}$ to the bank and the bank gives at most $\frac{1}{4}\cdot2=\frac{1}{2}$ to $f_2$ and $f_4$. So we may assume that $f_3$ is a $8$-face.  Note that $v$ is a rich $4$-vertex to $f_3$ and sends at least $\frac{1}{4}$ to the bank. Then we are only need to find $\frac{1}{4}$ in the bank that can be given to the two special $8$-faces.

  Note that $f_3$ is adjacent to two $(3,3,3)$-faces which are also adjacent to $f_2$ and $f_4$, respectively.  For the other three vertices on $f_3$, they may control up to two $4$-faces (and no $3$-face).   So we consider the following three cases:

{\bf Case 1.}  $f_3$ controls no $4$-faces.  In this case, $f_3$ gets at least $\frac{1}{4}$ from $v$ and needs to send out $2$ to the $3$-faces. So by (R3c), $f_3$ gives at least $\frac{1}{4}$, which may be thought of from $v$, to the bank, thus can be used to $f_2$ and $f_4$.

{\bf Case 2.} $f_3$ contains a $(3^+, 4^+, 3^+)$-path that controls two $4$-faces.  In this case, if $f_3$ has a $5^+$-vertex or one semi-rich $4$-vertex, then $\mu^*(f_3)\ge8-6-1\cdot2+\frac{1}{4}-\max\{\frac{1}{4}\cdot2-\frac{1}{2}, \frac{1}{4}-\frac{1}{4}\}=\frac{1}{4}$. By (R3c), $f_3$ gives at least $\frac{1}{4}$ to the bank, which can be used to $f_2$ and $f_4$.
So we may assume that it is a $(3,4,3)$-path. Then by Lemma~\ref{reducible2}(b),  each of the two controlled $4$-faces by the $(3,4,3)$-path contains at least two $4^+$-vertices. So $\mu^*(f_3)\ge8-6-1\cdot2+\frac{1}{4}=\frac{1}{4}>0$. By (R3c), $f_3$ gives at least $\frac{1}{4}$, which again may be thought of from $v$, to the bank, thus can be used to $f_2$ and $f_4$.

{\bf Case 3.}  $f_3$ controls exactly one $4$-face.    If $f_3$ contains a $(3^+,4^+)$-path, then $\mu^*(f_3)\ge8-6-1\cdot2-\frac{1}{4}+\frac{1}{4}\cdot2=\frac{1}{4}>0$. By (R3c), $f_3$ gives at least $\frac{1}{4}$, which may be thought of from $v$, to the bank, thus can be used to $f_2$ and $f_4$.   So we may assume that $f_3$ contains a $(3,3)$-path that controls a $4$-face. Let the eighth vertex of $f_3$ be $u$. The proof of Lemma~\ref{reducible2}(b) implies that $d(u)\ge 4$.

\begin{itemize}
\item  $d(u)\ge5$.  By (R2c), $u$ gives at least $\frac{1}{2}$ to $f_3$. So $\mu^*(f_3)\ge8-6-1\cdot2-\frac{1}{2}+\frac{1}{4}+\frac{1}{2}=\frac{1}{4}>0$. By (R3), $f_3$ gives at least $\frac{1}{4}$, which may be thought of from $v$, to the bank, thus can be used to $f_2$ and $f_4$.

\item $d(u)=4$ and $f_3$ is adjacent to a $(3,3,3^+,4^+)$-face. Then $\mu^*(f_3)\ge8-6-1\cdot2-\frac{1}{4}+\frac{1}{4}\cdot2=\frac{1}{4}>0$. By (R3c), $f_3$ gives at least $\frac{1}{4}$, which may be thought of from $v$, to the bank, thus can be used to $f_2$ and $f_4$.

\item $d(u)=4$ and $f_3$ is adjacent to a $(3,3,3,3)$-face. Note that $u$ is a rich $4$-vertex to $f_3$.  If $u$ is not incident with any $4^-$-faces, then $u$ gives at most $2-\frac{1}{4}\cdot4=1$ to the bank and the bank sends out at most $\frac{1}{4}\cdot3=\frac{3}{4}$ to at most three special $8$-faces at $u$ (note that $f_3$ is not special).  So the bank may send $\frac{1}{4}$ from $u$ to $f_2$ or $f_4$ at $v$.
If $u$ is incident with a $4^-$-face, say $f_1'$, then $f_1'$ must be a $4$-face since the distance of triangles in $G$ is at least $2$.  Let $f_1',f_2',f_3,f_4'$ be the four faces incident with $v$ in clockwise order, where $f_4'$ is adjacent to $f_2$.  Since $f_2'$ is adjacent to two $4$-faces at distance $1$, $f_2'$ cannot be a special $8$-face. Since $f_4'$ contains a semi-rich $4$-vertex next to an adjacent $(3,3,3)$-face, $f_4'$ cannot be $P_1, P_2$ or $P_3$.  By Lemma~\ref{reducible2}(c),  $f_4'$ cannot be $P_4$. So $u$ is not on a special $8$-face.  Then $u$ gives at least $2-1-\frac{1}{4}\cdot3=\frac{1}{4}$ to the bank, which can be given to $f_2$ or $f_4$.
\end{itemize}

So the bank has nonnegative final charge.
\end{proof}

\section*{Acknowledgement}

The authors are very thankful for the referees for their careful reading and many helpful comments.

\end{document}